\newcommand{\diag}{{diag}}
\newcommand{\R}{\mathbb{R}}
\newcommand{\C}{\mathbb{C}}
\newcommand{\F}{\mathbb{F}}
\newcommand{\D}{\mathbb{D}}
\newcommand{\bH}{\mathbb{H}}
\newcommand{\Hq}{\mathbb{H}}
\newcommand{\bP}{\mathbb{P}}
\newcommand{\bS}{\mathbb{S}}
\newcommand{\M}{\mathcal{M}}
\newcommand{\Conv}{{\rm conv}}
\newcommand{\iConv}{{\rm iconv}}
\newcommand{\bD}{\mathbb{D}}
\newcommand{\vc}[1]{\boldsymbol{#1}}
 \newtheorem{thm}{Theorem}[section]
 \newtheorem{cor}[thm]{Corollary}
 \newtheorem{lemma}[thm]{Lemma}
 \newtheorem{prop}[thm]{Proposition}
 \theoremstyle{definition}
 \newtheorem{defn}[thm]{Definition}
 \theoremstyle{remark}
 \newtheorem{ex}[thm]{Example}
 \numberwithin{equation}{section}
\begin{document}

\thanks{The second author was partially supported by FCT
 through project UID/MAT/04459/2013 and the third author was partially supported by FCT through CMA-UBI, project PEst-OE/MAT/UI0212/2013.}

\title[Convexity and circularity of the numerical range]{On the convexity and circularity of the numerical range of nilpotent quaternionic matrices}

\author[L. Carvalho]{Lu\'{\i}s Carvalho}
\address{Lu\'{\i}s Carvalho, ISCTE - Lisbon University Institute\\    Av. das For\c{c}as Armadas\\     1649-026, Lisbon\\   Portugal}
\email{luis.carvalho@iscte-iul.pt}
\author[Cristina Diogo]{Cristina Diogo}
\address{Cristina Diogo, ISCTE - Lisbon University Institute\\    Av. das For\c{c}as Armadas\\     1649-026, Lisbon\\   Portugal\\ and \\ Center for Mathematical Analysis, Geometry,
and Dynamical Systems\\ Mathematics Department,\\
Instituto Superior T\'ecnico, Universidade de Lisboa\\  Av. Rovisco Pais, 1049-001 Lisboa,  Portugal
}
\email{cristina.diogo@iscte-iul.pt}
\author[S. Mendes]{S\'{e}rgio Mendes}
\address{S\'{e}rgio Mendes, ISCTE - Lisbon University Institute\\    Av. das For\c{c}as Armadas\\     1649-026, Lisbon\\   Portugal\\ and Centro de Matem\'{a}tica e Aplica\c{c}\~{o}es \\ Universidade da Beira Interior \\ Rua Marqu\^{e}s d'\'{A}vila e Bolama \\ 6201-001, Covilh\~{a}}
\email{sergio.mendes@iscte-iul.pt}
\subjclass[2010]{15B33, 47A12}

\keywords{quaternions, numerical range, nilpotent matrix}
\date{\today}

\maketitle

\begin{abstract}
We provide a sufficient condition for the numerical range of a nilpotent matrix $N$ to be circular in terms of the existence of cycles in an undirected graph associated with $N$. We prove that if we add to this matrix $N$ a diagonal real matrix $D$, the matrix $D+N$ has convex numerical range. For $3 \times 3$ nilpotent matrices, we strength further our results and obtain necessary and sufficient conditions for circularity and convexity of the numerical range.
\end{abstract}


\maketitle
\section{Introduction}

Let $\bH$ denote the Hamilton quaternions and let $A$ be a $n\times n$ matrix with quaternionic entries. The quaternionic numerical range of $A$, denoted $W(A)$, was introduced in 1951 in Kippenhahn's seminal article \cite{Ki} as an analogue of the long established complex numerical range (see \cite{R} for an account on quaternionic numerical ranges). Specifically, $W(A)$ is the subset of $\bH$ whose elements have the form $\vc{x}^*A\vc{x}$, where $\vc{x}$ runs over the unit sphere of $\bH^n$. Due to the failure of Toeplitz-Hausdorff theorem in the quaternionic setting, the convexity of $W(A)$ has been studied by several authors. In \cite{Ki}, Kippenhahn introduced the Bild of $A$, denoted $B(A)$, as the intersection of $W(A)$ with the complex plane and studied its convexity. The Bild is indeed a planar substitute of $W(A)$ in the sense that every element of $W(A)$ is equivalent to an element in $W(A)\cap\C$.

The first remarkable result on convexity is due to Au-Yeung who proved in 1984 that $W(A)$ is convex if, and only if, the projection of $W(A)$ over $\R$ (resp., $\C$) equals the real (resp., the complex) elements in $W(A)$, see \cite[theorems 2 and 3]{Ye1}. In that same paper, the author gives necessary and sufficient conditions on the eigenvalues of a normal matrix $A$ for $W(A)$ to be convex. The convexity of the Bild, already an issue in \cite{Ki}, was established for normal matrices in 1994 by So, Thompson and Zhang. They proved in \cite[p. 192]{STZ} that the closed upper half plane part of the Bild (the upper Bild $B^+(A))$ of a normal matrix $A$ is the convex hull of eigenvalues and cone vertices. Later on, the proof was simplified by Au-Yeung \cite{Ye2}.

The general case was settled by So and Thompson in 1996. In \cite[theorem 15.2]{ST} they proved that for any matrix $A$, the intersection of $W(A)$ with the closed upper half plane is always convex.

Another problem that attracted much attention in the complex setting is the shape of the numerical range. In \cite[theorem 17.1]{ST}, So and Thompson characterized the numerical range of $2\times 2$ quaternionic matrices, the analogue of the elliptical range theorem. However, compared with the complex case, the shape of the numerical range of quaternionic matrices seems to have been more neglected.

In this article we study the convexity and shape of the numerical range for nilpotent quaternionic matrices. To be more specific, we determine under what conditions $W(A)$ has circular shape or, at least, is convex. In section 2 of this article we recall some definitions and fix notation. In section 3 we deal with the circularity of the numerical range. Theorem \ref{center at origin} shows that if the numerical range of a nilpotent matrix is a disk, its center must be located at the origin. This is the quaternionic analogue of \cite[proposition 1]{MM}. We conclude this section with theorem \ref{tree circular disk} which says that a sufficient condition for the numerical range of a nilpotent matrix $A$ to be a disk is that the associated graph of $A$ is a tree. This condition is not necessary as example \ref{ex_4x4realmatrix} shows. In section 4 we extend the results of the previous section. Every matrix  $A\in\mathcal{M}_n(\bH)$ is, up to unitary equivalence, upper triangular and every upper triangular matrix decomposes as a sum of a diagonal with a nilpotent matrix. The main result of this section is theorem \ref{W_is_smaller}, where it is proved that when the diagonal part is real and the nilpotent part is a tree then the numerical range is a union of disks. To reach this result we apply Berge's maximum theorem, a technique not much seen in the literature. Corollary \ref{prop_diag+shiftlike_convex} proves that this class of matrices have convex numerical range. We end the section providing an example of one of these matrices where the union of disks that compose its numerical range is in fact an ellipse. In section 5, we focus on $3\times 3$ nilpotent matrices concerning convexity and circularity of the numerical range. Theorem \ref{NR_3x3_disk} says that a necessary and sufficient condition for the numerical range of $A\in\mathcal{M}_3(\bH)$ to be a disk with center at the origin is that $A$ is cycle-free. On the other hand, theorem \ref{NSC 3x3 convex} gives a necessary and sufficient condition for the same class of matrices to have convex numerical range. Specifically, $W(A)$ is convex if, and only if, $a_{13}^*a_{12}a_{23}\in\R$. The link with our work is now provided by theorem 1 of Chien and Tam \cite{CT} in the complex setting.

\section{Preliminaries and notation} \label{section_prelims}

In this section we present some well known facts about quaternions and fix some notation. The quaternionic skew-field $\bH$ is an algebra of rank $4$ over $\R$ with basis $\{1, i, j, k\}$. The product in $\bH$ is given by $i^2=j^2=k^2=ijk=-1$. Denote the pure quaternions by $\bP=\mathrm{span}_{\R}\,\{i,j,k\}$. For any $q=a_0+a_1i+a_2j+a_3k\in\bH$ let $\pi_{\R}(q)=a_0$ and $\pi_{\bP}(q)=a_1i+a_2j+a_3k$ be the real and imaginary parts of $q$, respectively. The conjugate of $q$ is given by $q^*=\pi_{\R}(q)-\pi_{\bP}(q)$ and the norm is defined by $|q|^2=qq^*$. Two quaternions $q_1,q_2\in\bH$ are called similar if there exists a unitary quaternion $s$ such that $s^{*}q_2 s=q_1$. Similarity is an equivalence relation and we denote by $[q]$ the equivalence class containing $q$. A necessary and sufficient condition for the similarity of $q_1$ and $q_2$ is that $\pi_{\R}(q_1)=\pi_{\R}(q_2) \textrm{ and }|\pi_{\bP}(q_1)|=|\pi_{\bP}(q_2)|$  \cite[theorem 2.2.6]{R}.

Let $\F$ denote $\R$, $\C$ or $\bH$. Let $\F^n$ be the $n$-dimensional $\F$-space. For $\vc{x}\in\F^n$, $\vc{x}^*$ denote the conjugate transpose of $\vc{x}$.
The disk with center $\vc{a}\in\F^n$ and radius $r\geq 0$ is the set $\D_{\F^n}(\vc{a},r)=\{\vc{x}\in\F^n:|\vc{x}-\vc{a}|\leq r\}$ and its boundary is the sphere $\bS_{\F^n}(\vc{a},r)$.  In particular, if $\vc{a}=\vc{0}$ and $r=1$, we simply write $\D_{\F^n}$ and $\bS_{\F^n}$. The group of unitary quaternions is denoted by $\bS_{\bH}$. Notice that we are considering the singleton $\{\vc{a}\}$ to be the disk with center $a$ and radius $r=0$. Any $x \in \Hq$ can be written as $x=\beta z$, with $\beta=|x|$ and  $z \in \bS_{\Hq}$\footnote{If $x\neq 0$, $z$ is uniquely defined. However, if $x = 0$ that is not the case and we take $z=1$.}. We introduce the following notation:
\begin{align*}
&\R^{n,+} =\{(\beta_1,...,\beta_n)\in\R^n:\beta_i\geq 0, 1\leq i\leq n\}\\
&\bS^+_{\R^n} =\bS_{\R^n}\cap\R^{n,+}.
\end{align*}
Let $\M_{n} (\F)$ be the set of all $n\times n$ matrices with entries over $\F$.
Let $A\in \M_{n} (\Hq)$.
The set \[W(A)=\{\vc{x}^*A\vc{x}:\vc{x}\in \bS_{\Hq^n}\}\]
is called the numerical range of $A$ in $\Hq$. As usual,  the complex numerical range of a complex matrix is defined by \[W_{\C}(A)=\{\vc{x}^*A\vc{x}:\vc{x}\in \bS_{\C^n}\}.\]

It is well known that if $q\in W(A)$ then $[q]\subseteq W(A)$ \cite[page 38]{R}. Therefore, it is enough to study the subset of complex elements in each similarity class. This set is known as $B(A)$, the Bild of $A$
\[
B(A)=W(A)\cap\C.
\]
Although the Bild may not be convex, the upper Bild $B^+(A)=W(A)\cap\C^+$ is always convex, see \cite{ST}.

Taking into account that $\R$ can be seen as a real subspace of $\Hq$, what we denoted by $\pi_{\R}$ is, in fact, the projection of $\Hq$ over $\R$,  $\pi_{\R}:\Hq \rightarrow \R$. The projection of $W(A)$ over $\R$ is
\begin{equation*}
  \pi_{\R}(W(A))= \{\pi_{\R}(w): \; w\in W(A)\}.
\end{equation*}

In the next section we will define a relation between the circularity of the numerical range of $A$ and the lack of cycles of an associated undirected graph. To be more specific, given a matrix $A=[a_{ij}] \in \M_n (\bH)$ we may define the underlying undirected graph $\mathcal{G}_A$ with $n$ vertices as the graph with an edge between $i$ and $j$ whenever $a_{ij}\neq 0$ or $a_{ji}\neq 0$. That is, if $\delta:\Hq \to \{0,1\}$ is the indicator function, $\delta(q)=1$ if $q\neq 0$ and $\delta(q)=0$ otherwise, let $A_{\delta}$ be the symmetric matrix given by
\[
A_\delta=\Big[ \delta(\max\{a_{ij},a_{ji}\}) \Big]_{i,j=1}^n.
\]
Then $A_{\delta}$ is precisely the adjacency matrix of the undirected graph $\mathcal{G}_A$. We say that the graph $\mathcal{G}_A$ has a path between the vertices $i, j \in \{1, \ldots, n\}$, if there is a sequence of vertices $(i_1, i_2, \ldots, i_p)$ such that:
\[
 i_1=i, \qquad i_p=j, \qquad (A_\delta)_{i_k, i_{k+1}}=1, \text{ for } k=1,\ldots, p-1.
\]
In terms of the elements $a_{km}$ of the matrix $A$ this condition is equivalent to  $a_{i_ki_{k+1}}\neq 0$, for all $k\in\{1, \ldots, p-1\}$ and to $a_{i_1i_2}a_{i_2i_3}\ldots a_{i_{p-1} i_{p}} \neq 0 $. The graph $\mathcal{G}_A$ is connected if there is a path between any pair of vertices $i,j \in \{1, \ldots, n\}$, otherwise it is disconnected. We say that the matrix $A$ is connected (resp., disconnected) whenever $\mathcal{G}_A$ is connected (resp., disconnected) .
The graph $\mathcal{G}_A$ has a cycle (or the matrix $A$ has a cycle) if there is a vertex $i \in \{1, \ldots, n\}$ and a path connecting $i$ to itself. Loops are seen as cycles.  The graph $\mathcal{G}_A$ is cycle-free (or the matrix $A$ is cycle-free) if there are no cycles in  $\mathcal{G}_A$.

If the graph  $\mathcal{G}_A$ is connected and cycle-free then it is a tree, and in this case the number of edges  is $n-1$ \cite[corollary 1.5.3]{Di}. It follows that there exists one vertex with only one edge and, if we eliminate this vertex and its edge, we get a graph with $n-1$ vertices and $n-2$ edges, which is also a tree. If $A$ is a nilpotent matrix and the graph  $\mathcal{G}_A$ is a tree, then there exists a permutation matrix $P$ such that $P^{\top}AP$ is upper triangular. More generally, two matrices $A, A'\in \M_n(\Hq)$ are unitarily equivalent if there exists a unitary $U\in \M_n(\Hq)$ such that $A'=U^*AU$, in which case we write $A'\sim A$. The relation $\sim$ is an equivalence relation. By Schur's triangularization theorem  \cite[theorem 5.3.6]{R}, every matrix $A\in \M_n(\Hq)$ is unitarily equivalent to an upper triangular matrix whose diagonal is complex. By  \cite[theorem 3.5.4]{R}, the numerical range is invariant for the equivalence classes $[A]_{\sim}$, with $A\in \M_n(\Hq)$. Therefore, it is enough to consider upper triangular matrices $A\in \M_n(\Hq)$ with complex diagonal entries.

For the rest of this article we will assume that $A\in \M_n(\Hq)$ is upper triangular.

\section{Circularity of the numerical range}

Our first result shows that the numerical range of a nilpotent matrix is either circular with center at the origin or it is not a disk. In other words, when circular, the disk must be centered at the origin.

Given $A \in \M_n(\Hq)$ there exists an associated complex matrix
\[
\chi(A)=\left[
\begin{array}{cc}
A_1 &  A_2 \\
-\bar A_2 & \bar A_1
\end{array}
\right]\in\M_{2n}(\C),
\]
where $A_1, A_2 \in \M_n(\C)$ and $A=A_1+A_2j$.

\begin{thm}\label{center at origin}
Let $A\in\M_n(\bH)$ be nilpotent. If $W(A)$ is a disk, then its center is at the origin.
\end{thm}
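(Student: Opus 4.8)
The plan is to exploit the scaling symmetry of the numerical range under multiplication by a unit quaternion, together with the fact that a nilpotent matrix has zero in its numerical range and has $0$ as its only eigenvalue.

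First I would record two elementary facts. For any unit quaternion $s\in\bS_{\bH}$ and any $\vc{x}\in\bS_{\bH^n}$, the vector $\vc{x}s$ is again a unit vector and $(\vc{x}s)^*A(\vc{x}s)=s^*(\vc{x}^*A\vc{x})s$, so $W(A)$ is invariant under the conjugation action $q\mapsto s^*qs$ of $\bS_{\bH}$; equivalently $W(A)$ is a union of similarity classes $[q]$. Second, since $A$ is nilpotent, $\vc{0}\in W(A)$: indeed $A$ is unitarily equivalent to a strictly upper triangular matrix, whose $(1,1)$ entry of $\vc{e}_1^*A\vc{e}_1$ is $0$; alternatively $W(A)$ contains the eigenvalues and $0$ is the only eigenvalue.

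Now suppose $W(A)=\D_{\bH}(\vc{a},r)$ for some center $\vc{a}\in\bH$ and radius $r\ge 0$. Because $W(A)$ is a union of similarity classes and each class $[q]$ with $q\notin\R$ is a $2$-sphere $\{\pi_{\R}(q)\}+\bS_{\bP}(0,|\pi_{\bP}(q)|)$, the set $W(A)$ is necessarily invariant under the full rotation action described above. I would then argue that a Euclidean ball in $\bH\cong\R^4$ that is invariant under the $\bS_{\bH}$-conjugation action (which fixes the real axis pointwise and acts as $SO(3)$ on $\bP$) must have its center on the real axis: applying the rotation to $\vc{a}$ shows $\pi_{\bP}(\vc{a})$ is fixed by all of $SO(3)$, hence $\pi_{\bP}(\vc{a})=0$, so $\vc{a}\in\R$. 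It remains to show $\vc{a}=0$. Here I use that $0\in W(A)$, so $|0-\vc{a}|=|\vc{a}|\le r$, i.e. $\vc{a}\in[-r,r]$; I must rule out $\vc{a}\neq 0$. The idea is to bring in the complex companion matrix $\chi(A)$ introduced just before the theorem: $A$ nilpotent forces $\chi(A)$ nilpotent, so $W_{\C}(\chi(A))$ is a complex region symmetric about $0$ (the numerical range of a nilpotent complex matrix — indeed of any matrix with $\mathrm{tr}=0$ is... no, rather one uses that $W_{\C}(\chi(A))$ for a nilpotent matrix is a disk or, at minimum, is balanced: $e^{i\theta}W_{\C}(\chi(A))=W_{\C}(e^{i\theta}\chi(A))$ and nilpotency is preserved, but that alone does not give central symmetry). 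More robustly, I would use the relation between $W(A)$ and $W_{\C}(\chi(A))$: one has $\pi_{\R}(W(A))=\mathrm{Re}\,W_{\C}(\chi(A))$ and, since $\chi(A)$ is nilpotent, $W_{\C}(\chi(A))$ contains $0$ and for a nilpotent matrix $\min\mathrm{Re}\,W_{\C}(\chi(A))=-\max\mathrm{Re}\,W_{\C}(\chi(A))$ because $\chi(A)$ and $-\chi(A)$ are unitarily similar when $A$ is nilpotent — no; the cleanest route is: a nilpotent matrix $N$ satisfies $W_{\C}(N)=-W_{\C}(N)$? That is false in general. I will instead invoke the quaternionic analogue of \cite[proposition 1]{MM} structurally: $\pi_{\R}(W(A))$ is an interval symmetric about $0$ because $A\sim -A$ is not available, but $W(A)$ a disk of center $\vc{a}\in\R$ has real projection $[\vc{a}-r,\vc{a}+r]$, while the real part of the numerical range of a nilpotent matrix, being the numerical range of the Hermitian part $\tfrac12(A+A^*)$ which has trace $0$, is an interval containing $0$ with midpoint $0$; hence $\vc{a}=0$.

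The main obstacle, and the step I expect to require the most care, is precisely this last point: pinning the center to $0$ rather than merely to the real axis. The rotation-invariance argument handles the imaginary directions cleanly, but the real coordinate requires a genuine input about nilpotency — either that the Hermitian part $\tfrac12(A+A^*)$ has zero trace so its spectrum (and hence its numerical range, an interval) is symmetric-in-average about $0$, forcing the midpoint of $\pi_{\R}(W(A))$ to be $0$, or an appeal to the structure of $W_{\C}(\chi(A))$. I would develop the trace argument carefully: $\pi_{\R}(W(A))=W_{\C}\bigl(\tfrac12(\chi(A)+\chi(A)^*)\bigr)=[\lambda_{\min},\lambda_{\max}]$ where $\sum\lambda_i = \mathrm{tr}\,\mathrm{Re}\,\chi(A)=0$; if the disk is centered at $\vc{a}$ then this interval equals $[\vc{a}-r,\vc{a}+r]$, so its midpoint $\vc{a}$ satisfies $2\vc{a}=\lambda_{\min}+\lambda_{\max}$, and I would need a symmetry of the spectrum of the Hermitian part (not just zero trace) to conclude $\vc{a}=0$ — this is exactly where I would either cite the analogue of \cite[proposition 1]{MM} or supply the short spectral-symmetry argument for the Hermitian part of a nilpotent matrix.
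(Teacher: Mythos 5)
Your reduction of the center to the real axis (via invariance of $W(A)$ under $q\mapsto s^*qs$) is correct, and so is the observation that $0\in W(A)$, but the step you yourself identify as the crux --- forcing the \emph{real} coordinate of the center to vanish --- is genuinely missing, and the concrete argument you propose for it fails. The Hermitian part of a nilpotent matrix has zero trace, but its spectrum need \emph{not} be symmetric about $0$: for $N=\bigl(\begin{smallmatrix}0&1&1\\0&0&1\\0&0&0\end{smallmatrix}\bigr)$ the Hermitian part $\tfrac12(N+N^*)$ has eigenvalues $1,-\tfrac12,-\tfrac12$, so the real projection of the numerical range is $[-\tfrac12,1]$, whose midpoint is not $0$. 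Zero trace controls $\sum_i\lambda_i$, never $\lambda_{\min}+\lambda_{\max}$, so there is no ``short spectral-symmetry argument'' to supply; the symmetry you need holds only under the hypothesis that the numerical range is a disk, and that implication is precisely the nontrivial content of \cite[proposition 1]{MM} --- i.e., essentially the statement you are trying to prove. Ending with a disjunction between citing that result (without the bridge needed to apply it) and supplying an argument that is false leaves the central step unestablished.

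The route that does work, and is the paper's, is the branch you mention but do not develop: a disk is convex, so Au-Yeung's theorem (\cite[theorem 2]{Ye1}, see also \cite[p.\ 280]{Ye2}) gives $W(A)\cap\C=W_{\C}(\chi(A))$; since $\chi$ is multiplicative, $A$ nilpotent forces $\chi(A)$ nilpotent; the intersection of the ball $W(A)=\D_{\bH}(\vc{a},r)$ with the plane $\C$ is a complex disk (your rotation argument, or simply the geometry of slicing a ball by a plane meeting it, gives this), hence $W_{\C}(\chi(A))$ is a disk and the complex result \cite[proposition 1]{MM} pins its center at the origin; finally, $W(A)$ is the union of the similarity classes of its Bild, so $W(A)=\D_{\bH}(0,r)$. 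So the needed input about nilpotency enters only through $\chi(A)$ and the cited complex proposition, not through any trace or spectral symmetry of the Hermitian part; as written, your proposal has a genuine gap at exactly this point.
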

\begin{proof}
Since $\chi(AB)=\chi(A)\chi(B)$,\cite[theorem 4.2]{Zh}, $A$ is nilpotent if and only if $\chi(A)$ is nilpotent. When $W(A)$ is circular, it is convex, and according to \cite[theorem 2]{Ye1} and \cite[p. 280]{Ye2},  $W(A) \cap \C= W_{\C}\big(\chi(A)\big)$. Thus $W_{\C}\big(\chi(A)\big)$ is a disk in $\C$. According to \cite[proposition 1]{MM} a nilpotent complex matrix whose numerical range is a disk must have center at the origin. Thus $W(A) \cap \C=W_{\C}(\chi(A))= \bD_{\C}(0, r)$, where $r$ is the radius. We conclude, rebuilding the numerical range by taking the equivalence classes of the elements of the Bild, that $W(A)$ is a disk centered at the origin, i.e. $W(A) = \bD_{\Hq}(0, r)$.
\end{proof}

In theorem \ref{tree circular disk} we will prove that if the graph associated with the nilpotent matrix $A \in \M_n(\Hq)$ has no cycles then the numerical range of $A$ is a disk. When the graph $\mathcal{G}_A$ is disconnected, we can partition the set of vertices into connected components, where each component has no edge to the other components. Then, in terms of the original matrix $A$, we can (through a reordering of the vertices if necessary, \emph{i.e.} through $P^{\top}AP$ where $P$ is a permutation of $\{1,\ldots, n\}$) write $A$ as a block matrix. Now, each block of matrix $A$ is connected. In addition, if $A$ is cycle-free, then each block is cycle-free.

The fact that the quaternionic numerical range is not always convex motivates the following definition.

\begin{defn}
Let $\mathcal{A}_1, \dots, \mathcal{A}_n$ be subsets of $\Hq$. The inter-convex hull of the $\mathcal{A}_i$'s is the set
\[
\iConv \{\mathcal{A}_1, \dots, \mathcal{A}_n\}= \Big\{\sum_i \alpha_i^2 a_i: \, \vc{\alpha}\in \bS_{\R^n} ,\, a_i\in \mathcal{A}_i, \, i=1, \dots, n\Big\}.
\]
\end{defn}

We can easily prove that:

\begin{prop}\label{prop_iconv}
Let $A_1\oplus\ldots \oplus A_k\in \M_n(\Hq)$. Then
\[
W(A_1\oplus\ldots \oplus A_k)=\iConv \Big\{W(A_1),\ldots, W(A_k) \Big\}.
\]
In particular, if $W(A_i)$ is convex, for $i=1, \ldots, k$, then
\[
W(A_1\oplus\ldots \oplus A_k)=\Conv \Big\{W(A_1),\ldots, W(A_k) \Big\},
\]
where $\Conv$ denotes the convex hull.
\end{prop}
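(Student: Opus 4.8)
The plan is to unwind both sides of the claimed equality directly from the definition of the numerical range and of the inter-convex hull. First I would take an arbitrary unit vector $\vc{x}\in\bS_{\Hq^n}$ and decompose it according to the block structure $A=A_1\oplus\cdots\oplus A_k$: write $\vc{x}=(\vc{x}_1,\dots,\vc{x}_k)$ with $\vc{x}_\ell$ in the $\Hq$-space corresponding to the $\ell$-th block, and set $\alpha_\ell=|\vc{x}_\ell|$. Then $\vc{\alpha}=(\alpha_1,\dots,\alpha_k)\in\bS_{\R^k}$ (in fact $\bS^+_{\R^k}$), since $\sum_\ell\alpha_\ell^2=|\vc{x}|^2=1$. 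Using the block-diagonal form, $\vc{x}^*(A_1\oplus\cdots\oplus A_k)\vc{x}=\sum_\ell\vc{x}_\ell^*A_\ell\vc{x}_\ell$. For each $\ell$ with $\alpha_\ell\neq 0$, the vector $\vc{x}_\ell/\alpha_\ell$ is a unit vector, so $\vc{x}_\ell^*A_\ell\vc{x}_\ell=\alpha_\ell^2\,(\vc{x}_\ell/\alpha_\ell)^*A_\ell(\vc{x}_\ell/\alpha_\ell)\in\alpha_\ell^2\,W(A_\ell)$; for $\ell$ with $\alpha_\ell=0$ the corresponding term is simply $0$. Hence every element of $W(A_1\oplus\cdots\oplus A_k)$ has the form $\sum_\ell\alpha_\ell^2 a_\ell$ with $a_\ell\in W(A_\ell)$, giving the inclusion $W(A_1\oplus\cdots\oplus A_k)\subseteq\iConv\{W(A_1),\dots,W(A_k)\}$.

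For the reverse inclusion I would start with $\vc{\alpha}\in\bS_{\R^k}$ and $a_\ell\in W(A_\ell)$, pick unit vectors $\vc{y}_\ell$ with $\vc{y}_\ell^*A_\ell\vc{y}_\ell=a_\ell$, and assemble $\vc{x}=(\alpha_1\vc{y}_1,\dots,\alpha_k\vc{y}_k)$. One checks $|\vc{x}|^2=\sum_\ell\alpha_\ell^2|\vc{y}_\ell|^2=\sum_\ell\alpha_\ell^2=1$, so $\vc{x}\in\bS_{\Hq^n}$, and $\vc{x}^*(A_1\oplus\cdots\oplus A_k)\vc{x}=\sum_\ell\alpha_\ell^2\,\vc{y}_\ell^*A_\ell\vc{y}_\ell=\sum_\ell\alpha_\ell^2 a_\ell$, which proves $\iConv\{W(A_1),\dots,W(A_k)\}\subseteq W(A_1\oplus\cdots\oplus A_k)$. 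Together the two inclusions establish the first displayed equality.

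For the "in particular" claim, assume each $W(A_\ell)$ is convex. The only thing to observe is that for convex sets $\mathcal{C}_\ell$ one has $\iConv\{\mathcal{C}_1,\dots,\mathcal{C}_k\}=\Conv\{\mathcal{C}_1,\dots,\mathcal{C}_k\}$: the inclusion $\iConv\subseteq\Conv$ is immediate since $\sum_\ell\alpha_\ell^2 a_\ell$ is a convex combination with weights $\alpha_\ell^2$ summing to $1$; conversely, a convex combination $\sum_\ell t_\ell a_\ell$ with $t_\ell\geq 0$, $\sum_\ell t_\ell=1$ and $a_\ell\in\mathcal{C}_\ell$ is realized by taking $\alpha_\ell=\sqrt{t_\ell}$, and any element of $\Conv\{\mathcal{C}_1,\dots,\mathcal{C}_k\}$ can be written in this form by convexity of each $\mathcal{C}_\ell$ (use Carath\'eodory within each piece, or simply note each $\mathcal{C}_\ell$ being convex absorbs the internal combinations). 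Substituting $\mathcal{C}_\ell=W(A_\ell)$ gives the second equality.

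I do not expect any serious obstacle here; the proposition is essentially a bookkeeping argument, which matches the paper's assertion that it "can easily be proved." The one point requiring mild care is the handling of zero components $\alpha_\ell=0$ in the forward direction (the quotient $\vc{x}_\ell/\alpha_\ell$ is undefined), which is dispatched by noting the corresponding summand vanishes and may be dropped; and, in the convex case, being explicit that weights of the form $\alpha_\ell^2$ with $\vc{\alpha}\in\bS_{\R^k}$ range over exactly the probability vectors in $\R^k$.
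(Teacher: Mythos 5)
Your argument is correct, and it is exactly the routine block-decomposition computation the paper leaves to the reader (the proposition is stated with ``We can easily prove that:'' and no written proof). The only point of care, the case $\alpha_\ell=0$, you handle properly by noting the summand vanishes (one may take any $a_\ell\in W(A_\ell)$, which is nonempty, to match the definition of $\iConv$), and your grouping argument showing $\iConv=\Conv$ for convex sets is the intended one.
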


Then, to figure out the numerical range of a nilpotent matrix $A$ without cycles, we just need to consider the numerical range of each block $A_i$,  a nilpotent matrix without cycles and connected (that is, a tree). Thus, to establish the relation between the circularity of the numerical range of $A$ and the existence of cycles, we will focus only on connected matrices. We start by an auxiliary and technical result.
\begin{lemma}\label{lemma_induction}
Let $A=[a_{ij}]_{i,j=1}^n \in \M_n(\bH)$ be a nilpotent and tree matrix and $\vc{\beta}=(\beta_1, \dots, \beta_n)\in \bS^+_{\R^n}$. Then
\[
\bigcup_{\substack{z_k \in \bS_{\bH}\\1\leq k\leq n}}\Bigg\{\sum_{i,j=1}^{n}  \beta_{i}\beta_j z^*_{i}a_{ij} z_j\Bigg\}= \sum_{i,j}^{n} \bS_{\bH}\Big(0, \beta_{i}\beta_j|a_{ij}|\Big).
\]
\end{lemma}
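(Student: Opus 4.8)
The plan is to induct on the number of vertices $n$, exploiting the tree structure. The base case $n=1$ is trivial since for a nilpotent $1\times 1$ matrix we have $a_{11}=0$ and both sides reduce to $\{0\}$. For the inductive step, recall from the discussion preceding the lemma that a nilpotent tree matrix can be brought to upper-triangular form by a permutation, and that a tree on $n$ vertices has a leaf: there is a vertex $\ell$ incident to exactly one edge, say the edge $\{\ell,m\}$. After relabeling I may assume $\ell=n$, so that the only nonzero off-diagonal entry in row $n$ or column $n$ is either $a_{mn}$ or $a_{nm}$; since $A$ is upper triangular and nilpotent (hence zero diagonal), exactly one of these is (possibly) nonzero, say $a_{mn}$ (the case $a_{nm}$ is symmetric). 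Write $A'$ for the $(n-1)\times(n-1)$ principal submatrix obtained by deleting row and column $n$; then $A'$ is again a nilpotent tree matrix, and the graph $\mathcal G_{A'}$ is the tree $\mathcal G_A$ with the leaf removed.

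Now split the double sum: $\sum_{i,j=1}^n \beta_i\beta_j z_i^* a_{ij} z_j = \sum_{i,j=1}^{n-1}\beta_i\beta_j z_i^* a_{ij} z_j + \beta_m\beta_n z_m^* a_{mn} z_n$. Here the first term ranges, as $z_1,\dots,z_{n-1}$ vary over $\bS_{\bH}$, exactly over the set on the left-hand side of the lemma for the matrix $A'$ and the vector $(\beta_1,\dots,\beta_{n-1})$, which by the induction hypothesis equals $\sum_{i,j=1}^{n-1}\bS_{\bH}(0,\beta_i\beta_j|a_{ij}|)$. So the full left-hand side is
\[
\Bigg(\sum_{i,j=1}^{n-1}\bS_{\bH}\big(0,\beta_i\beta_j|a_{ij}|\big)\Bigg) + \bigcup_{z_m,z_n\in\bS_{\bH}}\big\{\beta_m\beta_n z_m^* a_{mn} z_n\big\}.
\]
It remains to identify the second piece as $\bS_{\bH}(0,\beta_m\beta_n|a_{mn}|)$, and — this is the subtle point — to check that the union over the shared variable $z_m$ (which appears in \emph{both} pieces) does not cause trouble, i.e. that we may choose $z_m$ independently to realize any point in the second sphere while the first sum is already a full ``Minkowski sum of spheres'' regardless of $z_m$.

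For the sphere identity: writing $a_{mn}=|a_{mn}|\,u$ with $u\in\bS_{\bH}$, the map $(z_m,z_n)\mapsto z_m^* a_{mn} z_n = |a_{mn}|\, z_m^* u\, z_n$ has image exactly $\bS_{\bH}(0,|a_{mn}|)$: indeed $|z_m^* a_{mn} z_n| = |a_{mn}|$ always, and given any target $w$ with $|w|=|a_{mn}|$ one can solve $z_m^* u z_n = w$ (e.g. fix $z_m=1$ and use that left multiplication by $u$ is a bijection of $\bS_{\bH}$, so $z_n = u^* w/|a_{mn}|$ works). Scaling by $\beta_m\beta_n\ge 0$ gives $\bS_{\bH}(0,\beta_m\beta_n|a_{mn}|)$ (a singleton $\{0\}$ if $\beta_m\beta_n=0$, which is harmless). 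For the independence point, observe that once $z_n$ is chosen freely we may absorb any prescribed value of $z_m$ into the choice of $z_n$ to hit the target in the second piece; meanwhile the induction hypothesis says the first piece equals the full Minkowski sum of spheres for \emph{every} fixed choice of $z_1,\dots,z_{n-1}$ — wait, more carefully: the induction hypothesis is a statement about the union over all $z_1,\dots,z_{n-1}$. So I should argue the other direction of containment too, which is the routine ``$\supseteq$'': given a target point $\sum_{i,j} s_{ij}$ with $s_{ij}\in\bS_{\bH}(0,\beta_i\beta_j|a_{ij}|)$, use the induction hypothesis to pick $z_1,\dots,z_{n-1}$ realizing $\sum_{i,j\le n-1} s_{ij}$, then pick $z_n$ realizing $s_{mn}=\beta_m\beta_n z_m^* a_{mn} z_n$ for the already-fixed $z_m$ (possible because left multiplication is a bijection on $\bS_{\bH}$). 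The ``$\subseteq$'' direction is immediate from the triangle-inequality bound $|\beta_i\beta_j z_i^* a_{ij} z_j| = \beta_i\beta_j|a_{ij}|$ term by term. The main obstacle is precisely keeping the bookkeeping straight about which $z$-variables are shared between the two pieces and verifying that no constraint is lost; once the leaf is peeled off, every other step is a one-line quaternion computation.
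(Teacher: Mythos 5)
Your proof is correct and follows essentially the same route as the paper's: induct on $n$, peel off a leaf of the tree so that the variable $z_n$ appears in only one term, and use that freedom to realize any point of the sphere $\bS_{\bH}(0,\beta_m\beta_n|a_{mn}|)$ regardless of the already-fixed $z_m$, with the reverse inclusion coming from the term-by-term norm equality. Your mid-proof self-correction lands on exactly the double-inclusion bookkeeping the paper's argument encodes by taking the union over $z_n$ first, so no gap remains.
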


\begin{proof}
 The proof is done by induction in $n$. If $n=1$, then $A$ is the zero matrix and the result is obvious.

Now assume that
\[
\bigcup_{\substack{z_k \in \bS_{\bH}\\1\leq k\leq n-1}}\Bigg\{\sum_{i,j=1}^{n-1} \beta_{i}\beta_j z^*_{i}a_{ij} z_j\Bigg\}= \sum_{i,j}^{n-1} \bS_{\bH}\Big(0, \beta_{i}\beta_j|a_{ij}|\Big).
\]
In the tree $\mathcal{G}_{A}$ associated with $A$, pick any vertex that has only one edge. If necessary change its label to $n$. In this case, since $\mathcal{G}_A$ is a tree, we have that $a_{ni}=0$, for any $i \in \{1, \ldots, n\}$ and, for some $p \in \{1, \ldots, n-1\}$, $a_{pn}\neq 0$, $a_{in}=0$ if $i\in \{1, \ldots, n-1\}\backslash \{p\}$. We then have
\begin{align*}
\bigcup_{\substack{z_k \in \bS_{\bH}\\1\leq k\leq n}} \Big\{\sum_{i,j=1}^{n} \beta_{i}\beta_j z^*_{i}a_{ij} z_j\Big\}= & \bigcup_{\substack{z_k \in \bS_{\bH}\\1\leq k\leq n}} \bigg\{ \sum_{i,j=1}^{n-1} \beta_{i}\beta_j z^*_{i}a_{ij} z_j + \sum_{\max\{i,j\}=n} \beta_{i}\beta_j z^*_{i}a_{ij} z_j\bigg\}\\
=& \bigcup_{\substack{z_k \in \bS_{\bH}\\1\leq k\leq n-1}} \bigcup_{z_n \in \bS_{\bH}} \bigg\{ \sum_{i,j=1}^{n-1} \beta_{i}\beta_j z^*_{i}a_{ij} z_j + \beta_{p}\beta_n z^*_{p}a_{pn} z_n\bigg\}\\
=&  \bigcup_{\substack{z_k \in \bS_{\bH}\\1\leq k\leq n-1}} \bigg\{ \sum_{i,j=1}^{n-1}  \beta_{i}\beta_j z^*_{i}a_{ij} z_j +  \bS_{\bH}\Big(0, \beta_{p}\beta_n|a_{pn}|\Big)\bigg\}\\
=&  \bigcup_{\substack{z_k \in \bS_{\bH}\\1\leq k\leq n-1}} \bigg\{ \sum_{i,j=1}^{n-1}  \beta_{i}\beta_j z^*_{i}a_{ij} z_j\bigg\} +  \bS_{\bH}\Big(0, \beta_{p}\beta_n|a_{pn}|\Big)\\
=&  \bigcup_{\substack{z_k \in \bS_{\bH}\\1\leq k\leq n-1}} \bigg\{ \sum_{i,j=1}^{n-1}  \beta_{i}\beta_j z^*_{i}a_{ij} z_j \bigg\} + \sum_{i=1}^{n-1}  \bS_{\bH}\Big(0, \beta_{i}\beta_n|a_{in}|\Big)\\
\end{align*}
In the second and the last equality we used that the only non-zero $a_{in}$, for $i \in \{1, \ldots, n\}$, is $a_{pn}$ and that all $a_{ni}$ are zero.

By the induction hypothesis, the last equality can be written as
\begin{align*}
\bigcup_{\substack{z_k \in \bS_{\bH}\\1\leq k\leq n}} \Big\{\sum_{i,j=1}^{n} \beta_{i}\beta_j z^*_{i}a_{ij} z_j\Big\}=  & \sum_{i,j}^{n-1} \bS_{\bH}\Big(0, \beta_{i}\beta_j|a_{ij}|\Big) + \sum_{i=1}^{n-1}  \bS_{\bH}\Big(0, \beta_{i}\beta_n|a_{in}|\Big) \\
  = & \sum_{i,j}^{n} \bS_{\bH}\Big(0, \beta_{i}\beta_j|a_{ij}|\Big),
\end{align*}
again using that $a_{nj}=0$ for $j\in \{1, \ldots, n \}$.

Notice that the previous calculation was carried out under the assumption that the $(n-1)\times(n-1)$ matrix had no cycles. In fact, since the initial $n\times n$ matrix $A$ is a tree, as we mentioned before, if we eliminate a one edge vertex together with its edge, we end up with a new graph that is also a tree.
\end{proof}

\begin{thm}\label{tree circular disk}
Let $A\in \M_n(\bH)$ be a nilpotent and tree matrix. Then, $W(A)$ is a circular disk with center at the origin and radius $\max_{ \vc{\beta} \in \bS^+_{\R^n}} \sum_{i,j}^{n} \beta_{i}\beta_j|a_{ij}|$.
\end{thm}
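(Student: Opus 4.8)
The plan is to reduce the numerical range $W(A)$ to the union appearing in Lemma~\ref{lemma_induction} by exploiting the polar decomposition of the coordinates of a unit vector. Given $\vc{x} = (x_1, \dots, x_n) \in \bS_{\Hq^n}$, write each coordinate as $x_i = \beta_i z_i$ with $\beta_i = |x_i| \geq 0$ and $z_i \in \bS_{\Hq}$ (taking $z_i = 1$ when $x_i = 0$). Then $\vc{\beta} = (\beta_1, \dots, \beta_n) \in \bS^+_{\R^n}$, and $\vc{x}^* A \vc{x} = \sum_{i,j=1}^n \beta_i \beta_j z_i^* a_{ij} z_j$. Conversely, every choice of $\vc{\beta} \in \bS^+_{\R^n}$ and $z_i \in \bS_{\Hq}$ arises this way. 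Hence
\[
W(A) = \bigcup_{\vc{\beta} \in \bS^+_{\R^n}} \; \bigcup_{\substack{z_k \in \bS_{\Hq} \\ 1 \leq k \leq n}} \Bigg\{ \sum_{i,j=1}^n \beta_i \beta_j z_i^* a_{ij} z_j \Bigg\},
\]
and by Lemma~\ref{lemma_induction} the inner union equals $\sum_{i,j}^n \bS_{\Hq}\big(0, \beta_i \beta_j |a_{ij}|\big)$, which is the sphere $\bS_{\Hq}\big(0, \rho(\vc{\beta})\big)$ where $\rho(\vc{\beta}) = \sum_{i,j}^n \beta_i \beta_j |a_{ij}|$ — here I use that a sum of origin-centered quaternionic spheres (balls' boundaries) is again an origin-centered sphere of radius the sum of the radii, which follows since $\bS_{\Hq}(0,r_1) + \bS_{\Hq}(0,r_2) = \bS_{\Hq}(0, r_1 + r_2)$ for $r_1, r_2 \geq 0$ (a triangle-inequality plus intermediate-value argument in $\Hq \cong \R^4$, noting $0$ is attained when $r_1 = r_2$).

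Therefore $W(A) = \bigcup_{\vc{\beta} \in \bS^+_{\R^n}} \bS_{\Hq}\big(0, \rho(\vc{\beta})\big)$. The function $\rho$ is continuous on the compact connected set $\bS^+_{\R^n}$, so its image is a compact interval $[0, R]$ with $R = \max_{\vc{\beta} \in \bS^+_{\R^n}} \rho(\vc{\beta})$; note $0 \in \mathrm{im}(\rho)$ since $\rho$ vanishes at any standard basis vector (as $A$ is nilpotent, hence $a_{ii} = 0$, but in fact we only need that $\rho$ takes small values, and by connectedness the image is an interval containing $0$ if $\rho$ has a zero, which it does at $e_1$). Consequently
\[
W(A) = \bigcup_{r \in [0,R]} \bS_{\Hq}(0, r) = \bD_{\Hq}(0, R),
\]
the closed ball of radius $R$ centered at the origin, which is exactly the claimed circular disk.

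The main obstacle — really the only non-bookkeeping point — is justifying that the union over $\vc{\beta}$ of the spheres $\bS_{\Hq}(0,\rho(\vc{\beta}))$ fills in a solid disk rather than leaving gaps; this rests on the continuity of $\rho$ and the connectedness of $\bS^+_{\R^n}$ so that $\mathrm{im}(\rho)$ is an interval, together with the fact that $0 \in \mathrm{im}(\rho)$. One should also double-check the set-arithmetic claim $\sum_i \bS_{\Hq}(0,r_i) = \bS_{\Hq}(0, \sum_i r_i)$ used to collapse the sum of spheres; the inclusion $\subseteq$ is the triangle inequality in $\Hq$, and $\supseteq$ follows by scaling a single unit quaternion, so this is routine. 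Finally, I would remark that combining this theorem with Theorem~\ref{center at origin} shows the center being at the origin is forced, consistent with the statement.
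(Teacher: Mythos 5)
Your overall strategy is the same as the paper's (polar decomposition of the coordinates, Lemma~\ref{lemma_induction}, then a continuity/intermediate-value argument to fill in the disk), but there is one step that is false as stated: the set identity $\bS_{\Hq}(0,r_1)+\bS_{\Hq}(0,r_2)=\bS_{\Hq}(0,r_1+r_2)$. The Minkowski sum of two origin-centered spheres in $\Hq\cong\R^4$ is the closed annulus $\{q:|r_1-r_2|\leq|q|\leq r_1+r_2\}$; in particular $\bS_{\Hq}(0,r)+\bS_{\Hq}(0,r)=\D_{\Hq}(0,2r)$, not a sphere. Your own parenthetical remark that $0$ is attained when $r_1=r_2$ already contradicts the claimed equality, and the triangle inequality only yields the inclusion of the sum into the \emph{ball} $\D_{\Hq}(0,r_1+r_2)$, not into its boundary sphere. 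Consequently, for a fixed $\vc{\beta}$ with at least two nonzero radii $\beta_i\beta_j|a_{ij}|$, the inner set $\sum_{i,j}\bS_{\Hq}(0,\beta_i\beta_j|a_{ij}|)$ is strictly larger than $\bS_{\Hq}(0,\rho(\vc{\beta}))$, so your intermediate equality $W(A)=\bigcup_{\vc{\beta}}\bS_{\Hq}(0,\rho(\vc{\beta}))$ is not justified as written (even though, after the fact, both sides turn out to equal the ball).

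The good news is that the error is localized and the repair makes your argument coincide with the paper's. What you actually need, and what is true, are the two one-sided inclusions
\[
\bS_{\Hq}\big(0,\rho(\vc{\beta})\big)\subseteq\sum_{i,j}\bS_{\Hq}\big(0,\beta_i\beta_j|a_{ij}|\big)\subseteq\D_{\Hq}\big(0,\rho(\vc{\beta})\big),
\]
the first by choosing all summands collinear (your ``scaling a single unit quaternion'' remark, which is exactly the paper's choice $y_{ij}=\beta_i\beta_j|a_{ij}|\,y/|y|$), the second by the triangle inequality. Combining the right-hand inclusion with $\rho(\vc{\beta})\leq R$ gives $W(A)\subseteq\D_{\Hq}(0,R)$, and combining the left-hand inclusion with your continuity/connectedness argument (image of $\rho$ equals $[0,R]$ since $\rho(e_1)=0$) gives $\D_{\Hq}(0,R)\subseteq W(A)$. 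With that substitution your proof is correct and is essentially the proof in the paper.
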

\begin{proof}
 We have:
\[
W(A)=\bigcup_{\vc{x}\in\bS_{\bH^n}}\,\Bigg\{\sum_{i,j=1}^nx^*_ia_{ij}x_j\Bigg\}.
\]
Each summand $x^*_ia_{ij}x_j$ may be written as
\[
\beta_i\beta_jz_i^*a_{ij}z_j
\]
with $\beta_i,\beta_j\geq 0$ and $z_i,z_j\in\bS_{\bH}$. It follows that
$$W(A)=\bigcup_{\vc{\beta}\in\bS^+_{\R^n}}\bigcup_{\substack{z_k \in \bS_{\bH}\\1\leq k\leq n}}\Bigg\{\sum_{i,j=1}^n \beta_i\beta_jz^*_ia_{ij}z_j\Bigg\}.$$

From lemma \ref{lemma_induction}, we have
\begin{equation}\label{W(A) sum spheres}
 W(A)=\bigcup_{\vc{\beta}\in\bS^+_{\R^n}}\,\Bigg\{\sum_{i,j=1}^n  \,\bS_{\bH}(0,\beta_i\beta_j|a_{ij}|)\Bigg\}.
\end{equation}

Therefore, it remains to prove that
\[
\bigcup_{\vc{\beta}\in\bS^+_{\R^n}}\,\Bigg\{\sum_{i,j=1}^n  \,\bS_{\bH}(0,\beta_i\beta_j|a_{ij}|)\Bigg\}=\D_{\bH}\Bigg(0,\max_{ \vc{\beta} \in \bS^+_{\R^n}} \sum_{i,j}^{n} \beta_{i}\beta_j|a_{ij}|\Bigg).
\]
This is achieved by proving a double inclusion. If $y=\sum_{i,j=1}^{n} \beta_{i}\beta_jr_{ij}$ for a given $\vc{\beta}\in\bS^+_{\R^n}$, where $r_{ij}\in\bS_{\bH}(0,|a_{ij}|)$, then
\[
|y|\leq\sum_{i,j=1}^n\beta_i\beta_j|a_{ij}|\leq\max_{ \vc{\beta} \in \bS^+_{\R^n}} \sum_{i,j}^{n} \beta_{i}\beta_j|a_{ij}|.
\]
For the converse inclusion, first observe that the function $f:\bS^+_{\R^n}\to\R$ defined by
\[
f(\vc{\beta})=\sum_{i,j=1}^{n} \beta_{i}\beta_j|a_{ij}|
\]
is a continuous function on a compact set, so it has a maximum at, say, $\vc{\beta}^*\in\bS^+_{\R^n}$. Since $f$ is continuous, $f(1,0,\ldots,0)=0$ and $\bS^+_{\R^n}$ is connected, for every $r_0\in[0,f(\vc{\beta}^*)]$ there exists $\vc{\beta}_0\in\bS^+_{\R^n}$ such that $f(\vc{\beta}_0)=r_0$, that is, for any path connecting $(1,0,\ldots,0)$ to $\vc{\beta}^*$, the values of $f$ run surjectively over the interval $[0,f(\vc{\beta}^*)]$. Take $y\in\D_{\bH}(0,f(\vc{\beta}^*))$. Then, $|y|\leq f(\vc{\beta}^*)$ and $|y|=f(\vc{\beta})$, for some $\vc{\beta}\in\bS^+_{\R^n}$. If $y=0$, then we can take $\vc{\beta}=(1,0,\ldots,0)$ and the inclusion follows. Now, suppose $y\neq 0$. Let $y_{ij}=\beta_i\beta_j|a_{ij}|\dfrac{y}{|y|}$. It follows that
\[
\sum_{i,j=1}^ny_{ij}=f(\vc{\beta})\frac{y}{|y|}=y
\]
and $y_{ij}\in\bS_{\bH}(0,\beta_i\beta_j|a_{ij}|)$. Therefore,
\[
y\in\sum_{i,j=1}^n \bS_{\bH}(0,\beta_i\beta_j|a_{ij}|).
\]
\end{proof}
This result can be applied to more general matrices as shown in the next example.

\begin{ex}\label{example2}
Let $A=\left[
\begin{array}{ccc}
0&  2j&0 \\
0&  0&0\\
0&  0&1
\end{array}
\right]$ and write $A$ as a direct sum
\[
A=A_1\oplus A_2=\left[ \begin{array}{cc}0&  2j\\0&  0\end{array}\right]\oplus\left[ \begin{array}{c} 1\end{array}\right].
\]
By theorem \ref{tree circular disk} we have
\[ W(A_1)=\D_{\bH}(0, 1) \,\,\,\textrm{and}\,\,\,W(A_2)=\{1\}.\]
From proposition \ref{prop_iconv} it follows that
\[W(A)=\mathrm{conv}\,\{W(A_1),W(A_2)\}=\D_{\bH}(0, 1).\]
\end{ex}

The previous result, theorem \ref{tree circular disk}, can be extended to disconnected matrices.

\begin{cor}\label{cor_nil_cycle-free}
Let $A\in\M_n(\bH)$ be a nilpotent and cycle-free matrix. Then, $W(A)$ is a disk with center at the origin.
\end{cor}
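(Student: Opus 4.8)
The plan is to reduce the statement to the connected case, which is Theorem \ref{tree circular disk}, using the direct-sum decomposition induced by the connected components of $\mathcal{G}_A$, and then to compute the resulting convex hull.

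First I would relabel the vertices so that the connected components of $\mathcal{G}_A$ appear consecutively; concretely, replace $A$ by $P^\top A P$ for a suitable permutation matrix $P$, which is unitary and hence leaves $W(A)$ unchanged by \cite[theorem 3.5.4]{R}. We may therefore assume $A=A_1\oplus\cdots\oplus A_k$, where the block $A_i\in\M_{n_i}(\bH)$ corresponds to the $i$-th connected component, so that $\mathcal{G}_{A_i}$ is connected. Each $A_i$ is cycle-free, since a cycle in $\mathcal{G}_{A_i}$ would be a cycle in $\mathcal{G}_A$; a connected, cycle-free graph on $n_i$ vertices is a tree. Each $A_i$ is also nilpotent: a power of a block-diagonal matrix is the block-diagonal matrix of the corresponding powers, so $A^m=0$ forces $A_i^m=0$. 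Hence every $A_i$ is a nilpotent tree matrix, and Theorem \ref{tree circular disk} yields $W(A_i)=\D_{\bH}(0,r_i)$ with $r_i=\max_{\vc{\beta}\in\bS^+_{\R^{n_i}}}\sum_{s,t}\beta_s\beta_t|(A_i)_{st}|\geq 0$.

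Next I would invoke Proposition \ref{prop_iconv}. Since every $W(A_i)$ is convex, it gives
\[
W(A)=W(A_1\oplus\cdots\oplus A_k)=\Conv\{W(A_1),\ldots,W(A_k)\}=\Conv\{\D_{\bH}(0,r_1),\ldots,\D_{\bH}(0,r_k)\}.
\]
It remains to identify this convex hull of concentric disks. Put $R=\max_i r_i$. Each $\D_{\bH}(0,r_i)$ is contained in the convex set $\D_{\bH}(0,R)$, so $\Conv\{\D_{\bH}(0,r_1),\ldots,\D_{\bH}(0,r_k)\}\subseteq\D_{\bH}(0,R)$; conversely $\D_{\bH}(0,R)=\D_{\bH}(0,r_{i_0})$ for an index $i_0$ attaining the maximum, so $\D_{\bH}(0,R)$ is one of the sets whose hull is being taken and therefore lies in it. Hence $W(A)=\D_{\bH}(0,R)$, a disk centered at the origin, which is exactly the assertion.

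There is no genuinely hard step here: all the real content is carried by Theorem \ref{tree circular disk} and Proposition \ref{prop_iconv}. The only points requiring a word of care are the routine verifications that each connected component inherits nilpotency and the cycle-free property, and the elementary observation that the convex hull of concentric balls is the largest ball; one could alternatively phrase the last part directly through the inter-convex hull $\iConv\{W(A_1),\ldots,W(A_k)\}$ of Definition, but the convexity route above is the cleanest.
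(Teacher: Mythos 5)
Your proposal is correct and follows essentially the same route as the paper: permute to a block-diagonal form whose blocks are nilpotent tree matrices, apply Theorem \ref{tree circular disk} to each block, and use Proposition \ref{prop_iconv} to identify $W(A)$ with the convex hull of the resulting concentric disks. Your added details (that each block inherits nilpotency and the tree property, and that the convex hull of concentric disks is the largest one) merely make explicit what the paper leaves implicit.
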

\begin{proof}
There exist a permutation matrix $P$ such that $P^{\top}AP=A_1\oplus\ldots\oplus A_k$, where  each $A_i$ is a tree, square matrix. By proposition \ref{prop_iconv} we have
\begin{align*}
  W(A) & = W(P^{\top}AP)=W(A_1\oplus...\oplus A_k) \\
  & =\mathrm{iconv}\,\{W(A_1),...,W(A_k)\\
   & =\mathrm{conv}\,\{W(A_1),...,W(A_k)\}.
\end{align*}
 The result follows from the convexity of $W(A_i)$, see theorem \ref{tree circular disk}.
\end{proof}

We will now give an example that shows the implication of the previous result cannot be strengthened to an equivalence. We will provide a nilpotent (real) matrix $A$ with cycles that have a circular numerical range. The existence of such matrix is supported on two results. Firstly, in theorem 3.7 of \cite{CDM} it was shown that the quaternionic numerical range of a real matrix $A$ is the equivalence classes of the complex numerical range of the matrix $A$, that is
\begin{equation*}
W(A)=\Big[ W_{\C}(A) \Big], \text{ for } A \in M_n(\R).
\end{equation*}
And secondly, theorem $1$ of \cite{CT} provide necessary and sufficient conditions for a $4\times4$ nilpotent complex matrix to have a circular complex numerical range.
\begin{ex}\label{ex_4x4realmatrix}
Let $A \in M_4(\R)$ be
\[
A=\begin{bmatrix}
    0 & 1 & -1 & 0 \\
    0 & 0 & 1 & 1 \\
    0 & 0 & 0& 1\\
    0 & 0 & 0 & 0
\end{bmatrix}.
\]
This matrix satisfies both conditions of \cite{CT} for the numerical range to be circular, therefore the complex numerical range of $A$ is $W_{\C}(A)=\D_{\C}(0, r)$. Since $A$ is real, by \cite{CDM}, $W(A)=\Big[ \D_{\C}(0, r)\Big]=\D_{\Hq}(0, r)$.
\end{ex}

\section{A class of matrices with convex numerical range}
So far we have dealt with nilpotent matrices. In this section we will enlarge our domain and consider matrices that have real entries in the diagonal. It is know that when $A=\alpha I + N\in \M_{n}(\bH)$, for $\alpha \in \R$, the  numerical range is $W(\alpha I+N)=\alpha+W(N)$, see \cite[proposition 3.5.4]{R}. Therefore, if the numerical range of $N$ is convex, so is the numerical range of $A$. We expand this result on convexity for the case where $A$ can be written as the sum of real diagonal matrix $D$ and a nilpotent cycle-free matrix $N$, i.e. $T=N+D$.

Let $D=\diag (d_1, \ldots, d_n) \in \M_n(\R)$ and define
\[
\underline{d}=\min \{d_i: 1\leq i\leq n\}\,\,\,\textrm{ and }\,\,\, \overline{d}=\max \{d_i, 1\leq i\leq n\}.
\]
We will find out in  theorem \ref{W_is_smaller} that the numerical range of $A$ can be decomposed into a union of disks $\D_{\bH}(d, r(d))$, one for each $d \in [\underline{d},\overline{d}]$, that is,
\[
 W(A)= \bigcup_{d \in \big[\underline{d}, \overline{d}\big]}\D_{\bH}(d, r(d)).
\]
To prove the above decomposition, we will need to show that the radius of each disk $r(d)$ varies continuously with the center $d$.

\begin{lemma} \label{disk_continuity}
Let $A\in \mathcal{M}_{n}(\bH)$ be a matrix with real entries in the diagonal. Let
\begin{align*}
f: &\; \R^{n,+} \longrightarrow \R^+ &g :\R^{n,+}  \longrightarrow \big[\underline{d}, \overline{d}\big]  \,\,\,\,\,\,\\
&\,\,\,\,\,\vc{\beta} \mapsto \sum_{i \neq j} \beta_i\beta_j |a_{ij}|  & \vc{\beta} \mapsto \sum_{i} d_{i}\beta_i^2.
\end{align*}
Then, the function $r: \big[\underline{d}, \overline{d}\big] \to \R^+$ defined by
\begin{equation}\label{raio_d}
r(d)= \max \{f(\vc{\beta}): g(\vc{\beta})=d \text{ and }   \vc{\beta} \in \bS^+_{\R^n}\}
\end{equation}
is continuous.
\end{lemma}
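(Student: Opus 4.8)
The plan is to prove continuity of $r$ via Berge's maximum theorem, treating $r(d)$ as the value function of a parametrized optimization problem. First I would set up the constraint correspondence: for each $d \in [\underline{d}, \overline{d}]$ let
\[
\Phi(d) = \{\vc{\beta} \in \bS^+_{\R^n} : g(\vc{\beta}) = d\},
\]
so that $r(d) = \max\{f(\vc{\beta}) : \vc{\beta} \in \Phi(d)\}$. Since $f$ is continuous and $\Phi(d)$ is a closed subset of the compact set $\bS^+_{\R^n}$, the maximum is attained, so $r$ is well-defined; moreover $\Phi(d)$ is nonempty for every $d$ in the range because $g$ is continuous on the connected set $\bS^+_{\R^n}$ and attains both $\underline d$ (at a standard basis vector $\vc e_i$ with $d_i = \underline d$) and $\overline d$ (similarly), hence by the intermediate value theorem $g$ is surjective onto $[\underline d, \overline d]$. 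Berge's theorem then gives that $r$ is continuous provided the correspondence $\Phi$ is both upper hemicontinuous and lower hemicontinuous (i.e. continuous) with nonempty compact values.

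Upper hemicontinuity is the easy half: $\Phi$ has a closed graph inside the compact set $[\underline d, \overline d] \times \bS^+_{\R^n}$ — if $d_m \to d$ and $\vc{\beta}_m \to \vc{\beta}$ with $g(\vc{\beta}_m) = d_m$, then continuity of $g$ forces $g(\vc{\beta}) = d$ — and a closed-graph correspondence into a compact space is upper hemicontinuous with compact values. The main obstacle is lower hemicontinuity: given $d_m \to d$ and a point $\vc{\beta} \in \Phi(d)$, one must produce $\vc{\beta}_m \in \Phi(d_m)$ with $\vc{\beta}_m \to \vc{\beta}$. The difficulty is that the level set $\{g = d_m\}$ might a priori be ``far'' from $\vc{\beta}$ in a degenerate configuration. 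I would handle this by a direct construction: pick indices $p, q$ with $d_p \geq d$ and $d_q \leq d$ (possible since $d \in [\underline d, \overline d]$), and perturb $\vc{\beta}$ by transferring a small amount of squared-weight between coordinates $p$ and $q$ (keeping the unit-sphere constraint) so as to move the value of $g$ from $d$ to $d_m$. Concretely, for $\vc{\beta}$ with $\beta_p, \beta_q$ not both forced to $0$, the map $t \mapsto g$ of the perturbed vector is continuous and, by choosing whether to shift weight toward $p$ or toward $q$, its image covers a neighborhood of $d$; taking the perturbation size $\to 0$ as $d_m \to d$ yields the required sequence. The genuinely delicate sub-case is when $\vc{\beta} \in \Phi(d)$ has $\beta_p = \beta_q = 0$ for all admissible pairs — but then $\vc{\beta}$ is supported on coordinates $i$ with $d_i = d$, which forces $d$ to be simultaneously $\leq$ and $\geq$ every such $d_i$, and a short argument using that $g$ restricted to any small neighborhood of such a $\vc{\beta}$ still achieves values on both sides of $d$ (unless $d = \underline d = \overline d$, in which case $[\underline d,\overline d]$ is a point and continuity is trivial) closes the gap.

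Once lower and upper hemicontinuity are established, Berge's maximum theorem applies verbatim: with $f$ continuous and $\Phi$ a continuous, nonempty, compact-valued correspondence, the value function $r(d) = \max_{\vc{\beta} \in \Phi(d)} f(\vc{\beta})$ is continuous on $[\underline d, \overline d]$. I would present the proof in exactly this order — (1) reduce to the Berge setup and note nonemptiness via IVT, (2) verify upper hemicontinuity via the closed-graph criterion, (3) verify lower hemicontinuity by the weight-transfer perturbation argument with the degenerate support case dealt with separately, (4) invoke Berge — with step (3) being where essentially all the work lies.
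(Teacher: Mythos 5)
Your proposal follows essentially the same route as the paper: Berge's maximum theorem applied to the constraint correspondence $\Gamma(d)=g^{-1}(d)\cap\bS^+_{\R^n}$, with nonemptiness by an intermediate-value argument, upper hemicontinuity from compactness and the closed-graph criterion, and lower hemicontinuity by transferring squared weight between two coordinates while staying on the unit sphere. The paper's version of your step (3) chooses the receiving index from $\{j: d_j>d\}$ (possibly outside the support of $\vc{\beta}$) and the donating index from the support with $d_j\le d$, parametrizing the transfer by a rotation angle; this makes your ``delicate sub-case'' (support entirely at level $d$) disappear automatically, and only one-sided coverage of values near $d$ is needed, since $\delta_k>d$ and $\delta_k<d$ are treated separately.
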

\begin{proof}
Define a correspondence $\Gamma:\big[\underline{d}, \overline{d}\big] \rightrightarrows \bS^+_{\R^n}$ to be the intersection of fibers
\[
\Gamma(d)= g^{-1}(d) \cap h^{-1}(0),
\]
where $h:\R^{n,+}\rightarrow \R$ is $h(\vc{\beta})= \|\vc{\beta}\|-1$. We may rewrite function $r$ using the correspondence $\Gamma$ as follows:
\[
r(d)= \max \{f(\vc{\beta}): \vc{\beta} \in \Gamma(d)\}.
\]
According to Berge's maximum theorem, see \cite[p.116]{Be},  $r$ is continuous provided that $f$ and $\Gamma$ are continuous and, for each $d \in \big[\underline{d}, \overline{d}\big]$, $\Gamma(d) \neq \emptyset$. Clearly, $f$ is continuous, and since for each $d \in \big[\underline{d}, \overline{d}\big]$, there is a convex linear combination of $\underline{d}$ and $ \overline{d}$ equal to $d$, $\Gamma(d)$ is nonempty.

We will now prove that $\Gamma$ is continuous by showing that it is sequentially upper and lower semi-continuous. To prove upper semi-continuity, take any convergent sequence $\{\delta_k\}_k$ such that $\delta_k \to d$. We now prove that every sequence  $ \vc{\beta}_k \in \Gamma(\delta_k)$ has a convergent subsequence $\big\{\vc{\beta}_{k_p}\big\}_p$ where  $\vc{\beta}_{k_p}\rightarrow \vc{\beta} \in \Gamma(d)$. In fact, since $\vc{\beta}_k $ is a sequence in the compact set $ \bS^+_{\R^n}$, it has a subsequence that converges to some $\vc{\beta}$. And since $h$ and $g$ are continuous it follows that $\delta_{k_p}=g(\vc{\beta}_{k_p}) \rightarrow g(\vc{\beta})=d$ and $0=h(\vc{\beta}_{k_p}) \rightarrow h(\vc{\beta})$. Thus $\vc{\beta} \in g^{-1}(d) \cap h^{-1}(0)= \Gamma(d)$.

A correspondence is lower semi-continuous if, for any convergent sequence $\{\delta_k\}_k \subseteq \big[\underline{d}, \overline{d}\big]$, such that $\delta_k \to d$, and any $\vc{\beta} \in \Gamma(d)$, there is a convergent sequence $\{\vc{\beta}_k\}_k$,  such that $\vc{\beta}_k \in \Gamma(\delta_k)$ and $\vc{\beta}_k \rightarrow \vc{\beta}$. If  $D=\alpha I$, for some $\alpha \in\R$, then $\underline{d}=\overline{d}$ and there is nothing to prove since the correspondence's domain is a singleton and the correspondence is trivially lower semi-continuous.
When $\underline{d}<\overline{d}$ we need to find for each $\delta_k$ a vector $\vc{\beta}_k$ satisfying $\vc{\beta}_k \in \Gamma(\delta_k)$, and, the whole sequence $\big\{\vc{\beta}_k\big\}_k$, must be such that $\vc{\beta}_k \rightarrow \vc{\beta}$. When $\delta_k=d$ we choose $\vc{\beta}_k=\vc{\beta}$. When $d<\delta_k$, to find $\vc{\beta}_k$ we proceed in the following manner. The vector $\vc{\beta}$ is such that $g(\vc{\beta})=d$, and with it define the sets:%
\begin{align*}
&P(\vc{\beta})=\big\{j \in \{1,\ldots,n\}: \beta_j^2 > 0\big\},\\
&D(\vc{\beta})=\big\{j \in \{1,\ldots,n\}: d_j >d\big\},\\
&d(\vc{\beta})=\big\{j \in \{1,\ldots,n\}: d_j \leq d \big\}.
\end{align*}
Since $d<\delta_k\leq \overline{d}$, $D(\vc{\beta}) \neq \emptyset$. On the other hand, we also have that $P(\vc{\beta}) \cap d(\vc{\beta}) \neq \emptyset$, because $g(\vc{\beta})=d$ is a weighted average of the $d_i$ over the indices  $i \in P(\vc{\beta})$, thus we cannot have them all with strictly higher value than $d$.  We will choose one element from each of these sets, without loss of generality, the element $1$ from  $D(\vc{\beta})$ and the element $2$ from $P(\vc{\beta}) \cap d(\vc{\beta})$. Clearly $d_1>d_2$. Let $0<r^2=\beta_1^2+\beta_2^2\leq 1$ and let the function $\tilde g:[0, 2\pi] \to \R$ be
\[
\tilde g(\theta)= \sum_{i \geq 3} d_i\beta_i^2+ r^2\sin^2(\theta)d_1 + r^2 \cos^2(\theta)d_2.
\]
When $\theta_0=\arcsin \Big(\frac{\beta_1}{\sqrt{\beta_1^2+\beta_2^2}}\Big)$ then $\beta_1^2=r^2\sin^2(\theta_0)$ and $\beta_2^2=r^2\cos^2(\theta_0)$, and  $\tilde g(\theta_0)= g(\vc\beta)$. We have, for $d_1>d_2$,
\begin{align*}
&\tilde g\big(0\big)= \sum_{i \geq 3} d_i\beta_i^2+ r^2d_2 =d+\beta_1^2(d_2-d_1)\equiv \tilde d\leq d, \,\,\,(\beta_1^2\geq 0),\\ 
&\tilde g\Big(\frac{\pi}{2}\Big)= \sum_{i \geq 3} d_i\beta_i^2+ r^2d_1 =d+\beta_2^2(d_1-d_2)\equiv \hat{ d}>d, \,\,\,(\beta_2^2>0).
\end{align*}
The function $\tilde g$ is continuous and increasing in the interval $(0, \frac{\pi}{2})$, since
$
\tilde g'(\theta)=2\sin(\theta) \cos(\theta)r^2(d_1-d_2)>0.
$
Therefore $\tilde g$ is a homeomorphism (in fact a diffeomorphism) between $[0, \frac{\pi}{2}]$ and the interval $[\tilde d,  \hat{ d}]$. In order to find a vector $\vc{\beta}_k$ such that $g(\vc{\beta_k})=\delta_k$, $k$ must be sufficiently large for $\delta_k \in [\tilde d,  \hat{ d}]$. Since, on one hand $\delta_k \to d$ and, in this case, $d < \delta_k$ and, on the other hand, $\tilde d \leq d <\hat{d}$, there is a $K \in \mathbb{N}$ such that $\delta_k \in [\tilde d,  \hat{ d}]$, for any $k >K$. Then, for $k\leq K$, we can take any $\vc{\beta}_k \in \Gamma(\delta_k)$. For $k>K$, we start by finding $\theta_k$ such that $\tilde g(\theta_k)=\delta_k$. Now, let $\beta_{1,k}=r\sin(\theta_k)$, $\beta_{2,k}=r\cos(\theta_k)$ and $\beta_{i,k}=\beta_{i}$, for $i \in \{3, \ldots, n\}$, that is, $\beta_{1,k}=r\sin(\widetilde{g}^{-1}(\delta_k))$, $\beta_{2,k}=r\cos(\widetilde{g}^{-1}(\delta_k))$ and the remaining terms constant. We can easily verify that $g(\vc{\beta_k})=\widetilde{g}(\theta_k)=\delta_k$. When $\{\delta_k\}_k$ converges to $d$, clearly $\{\theta_k\}_k$ converges to $\theta$ since $\tilde g^{-1}$ is continuous. Therefore $\{\vc{\beta}_k\}_k$ converges to $\vc{\beta}$, as trigonometric functions are continuous.

When $\underline{d} \leq \delta_k<d$ we proceed in a similar way, and conclude that if the set $\{\delta_k:\delta_k<d\}$ is infinite then for each element of this subsequence there is a $\vc{\beta_k} \in \Gamma(\delta_k)$ and these $\vc{\beta_k}$ converge to $\vc{\beta}$.

In conclusion, for any convergent sequence $\{\delta_k\}_k$ we can find a sequence $\vc{\beta}_k\in \bS_{\bH}$ such that $g(\vc{\beta}_k)=\delta_k \rightarrow d=g(\vc{\beta})$.
\end{proof}

We are now able to show that $W(A)$ decompose as unions of disks.
\begin{thm} \label{W_is_smaller}
Let $A=D+N \in \mathcal{M}_{n}(\bH)$, with $D$ a diagonal matrix of real entries and $N$ a nilpotent and tree matrix. Then, we have
\[
W(A)= \bigcup_{d \in \big[\underline{d}, \overline{d}\big]} \D_{\bH}(d, r(d)),
\]
where $r(d)$ is given by (\ref{raio_d}).
\end{thm}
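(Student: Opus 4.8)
The plan is to mimic the proof of Theorem~\ref{tree circular disk}, carrying along the diagonal part $D$. Writing $\vc{x}=(x_1,\dots,x_n)\in\bS_{\bH^n}$ with $x_i=\beta_i z_i$, $\beta_i\ge 0$, $z_i\in\bS_{\bH}$, $\vc{\beta}\in\bS^+_{\R^n}$, we expand
\[
\vc{x}^*A\vc{x}=\sum_i d_i\beta_i^2+\sum_{i\ne j}\beta_i\beta_j z_i^*a_{ij}z_j=g(\vc{\beta})+\sum_{i\ne j}\beta_i\beta_j z_i^*a_{ij}z_j,
\]
since the diagonal of $N$ is zero. The second sum is exactly the expression handled by Lemma~\ref{lemma_induction} (applied to $N$, which is nilpotent and a tree); as $z_k$ ranges over $\bS_{\bH}$ it sweeps out $\sum_{i\ne j}\bS_{\bH}(0,\beta_i\beta_j|a_{ij}|)$, and by the same double-inclusion argument used inside Theorem~\ref{tree circular disk} this sum of spheres equals the \emph{disk} $\D_{\bH}(0,f(\vc{\beta}))$ with $f(\vc{\beta})=\sum_{i\ne j}\beta_i\beta_j|a_{ij}|$. (Here one must note that $z_i^*a_{ii}z_i$ does not appear, so the real shift $d_i$ is genuinely fixed by $\vc{\beta}$ alone, not by the $z_i$'s.) Hence
\[
W(A)=\bigcup_{\vc{\beta}\in\bS^+_{\R^n}}\Big(g(\vc{\beta})+\D_{\bH}\big(0,f(\vc{\beta})\big)\Big)
      =\bigcup_{\vc{\beta}\in\bS^+_{\R^n}}\D_{\bH}\big(g(\vc{\beta}),f(\vc{\beta})\big).
\]

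Next I would reorganize this union by the value of the center. Grouping the $\vc{\beta}$'s with $g(\vc{\beta})=d$, the union over that fiber of the disks $\D_{\bH}(d,f(\vc{\beta}))$ is the single disk $\D_{\bH}(d,r(d))$ with $r(d)=\max\{f(\vc{\beta}):g(\vc{\beta})=d,\ \vc{\beta}\in\bS^+_{\R^n}\}$ — the maximum is attained because $f$ is continuous and the fiber $\Gamma(d)$ is compact and, by Lemma~\ref{disk_continuity}, nonempty. Since $g$ takes values exactly in $[\underline d,\overline d]$ (it is a convex combination of the $d_i$, and every intermediate value is attained), this gives
\[
W(A)=\bigcup_{d\in[\underline d,\overline d]}\D_{\bH}(d,r(d)),
\]
which is the claimed identity. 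The role of Lemma~\ref{disk_continuity} here is not to prove the set equality itself but to guarantee that $r(d)$ is a well-defined continuous function of $d$, so that the union is a genuine decomposition into disks indexed continuously by their centers (and, incidentally, that $W(A)$ is closed); I would cite it at the point where $r(d)$ is introduced.

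The main subtlety — the step I would be most careful about — is the reduction of the $z$-union of the off-diagonal part to the disk $\D_{\bH}(0,f(\vc{\beta}))$ for a \emph{fixed} $\vc{\beta}$, i.e.\ making sure Lemma~\ref{lemma_induction} applies verbatim to $N$ and then repeating the "sum of concentric spheres equals the ball" argument from Theorem~\ref{tree circular disk}; this is where the tree hypothesis on $N$ is essential, and where one checks that introducing $D$ changes nothing because $D$ contributes only the $\vc{\beta}$-measurable real scalar $g(\vc{\beta})$ and no $z$-dependence. Everything else is bookkeeping: the outer union over $\vc{\beta}$ is re-indexed by the pair $(d,f(\vc{\beta}))=(g(\vc{\beta}),f(\vc{\beta}))$, and collapsing the fiber over each $d$ to its maximal-radius disk is immediate once one knows the fibers are nonempty and compact.
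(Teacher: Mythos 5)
There is a genuine gap at the central step of your plan. For a \emph{fixed} $\vc{\beta}$, Lemma~\ref{lemma_induction} tells you that the union over the $z_k$'s of $\sum_{i\ne j}\beta_i\beta_j z_i^*a_{ij}z_j$ is the Minkowski sum of spheres $\sum_{i<j}\bS_{\bH}\big(0,\beta_i\beta_j|a_{ij}|\big)$, and this is in general \emph{not} the disk $\D_{\bH}\big(0,f(\vc{\beta})\big)$ — it is typically an annulus. Simplest case: $n=2$, $N$ with a single nonzero entry $a_{12}$ and $D=\diag(d_1,d_2)$ with $d_1\neq d_2$; for fixed $\vc{\beta}$ you get only the sphere $\bS_{\bH}(0,\beta_1\beta_2|a_{12}|)$, and moreover each fiber $\Gamma(d)$ is a single point, so the fiberwise contribution is a sphere, never a disk. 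The double-inclusion argument of Theorem~\ref{tree circular disk} does not apply at fixed $\vc{\beta}$: it works only after taking the union over all $\vc{\beta}\in\bS^+_{\R^n}$, using that $f$ is continuous, vanishes at $(1,0,\ldots,0)$, and that $\bS^+_{\R^n}$ is connected, so every radius in $[0,\max f]$ is realized by \emph{some} $\vc{\beta}$. Hence your identity $W(A)=\bigcup_{\vc{\beta}}\D_{\bH}\big(g(\vc{\beta}),f(\vc{\beta})\big)$ is asserted rather than proved; what actually follows from Lemma~\ref{lemma_induction} is $W(A)=\bigcup_{\vc{\beta}}\big\{g(\vc{\beta})+\sum_{i<j}\bS_{\bH}(0,\beta_i\beta_j|a_{ij}|)\big\}$, which immediately yields only the inclusion $W(A)\subseteq\bigcup_{d}\D_{\bH}(d,r(d))$.

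The reverse inclusion is where the real work lies, and it is exactly where Lemma~\ref{disk_continuity} is essential rather than ``incidental''. An interior point $y$ of $\D_{\bH}(d,r(d))$ with $\underline{d}<d<\overline{d}$ need not be attainable with $g(\vc{\beta})=d$ at all (see the $2\times 2$ example above); it must be captured from a \emph{different} center. The paper handles boundary points of each disk by the aligned-decomposition trick from Theorem~\ref{tree circular disk}, handles interior points over the extreme centers $\underline{d},\overline{d}$ by an intermediate-value argument inside those special fibers, and for intermediate centers considers $\rho(t)=|y-t|-r(t)$ and uses continuity of $r$ (Berge's maximum theorem, Lemma~\ref{disk_continuity}) to find $\widetilde{d}$ with $y\in\bS_{\bH}\big(\widetilde{d},r(\widetilde{d})\big)\subseteq W(A)$. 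Your proposal, which reduces Lemma~\ref{disk_continuity} to a remark about the union being ``indexed continuously'', omits this entire half of the proof; as written, the argument establishes one inclusion and leaves the other unproved on a false fiberwise claim.
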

\begin{proof}
We begin by proving that $W(A) \subseteq \bigcup_{d \in \big[\underline{d}, \overline{d}\big]} \mathbb{D}_{\bH}(d, r(d))$. We have:
\begin{align}
W(A)=& \bigcup_{\vc{x} \in \bS_{\bH^n}} \vc{x}^*A\vc{x}= \bigcup_{\vc{x} \in \bS_{\bH^n}} \Big\{ \sum_i d_{i} |x_i|^2 + \sum_{i < j} x^*_ia_{ij}x_j  \Big\}
\nonumber\\
= & \bigcup_{\vc{\beta} \in \bS^+_{\R^n}}  \bigcup_{\substack{z_k \in \bS_{\bH}\\1\leq k\leq n}} \Big\{ \sum_i d_{i} \beta_i^2 + \sum_{i < j} \beta_i\beta_j z_i^* a_{ij}z_j \Big\}\nonumber\\
=& \bigcup_{\vc{\beta} \in \bS^+_{\R^n}}  \Big\{ \sum_i d_{i} \beta_i^2  +  \bigcup_{\substack{z_k \in \bS_{\bH}\\1\leq k\leq n}} \sum_{i < j} \beta_i\beta_j z_i^* a_{ij}z_j \Big\} \nonumber\\
=& \bigcup_{\vc{\beta} \in \bS^+_{\R^n}}  \Big\{ \sum_i d_{i} \beta_i^2 +  \sum_{i < j} \beta_i\beta_j \bS_{\bH}(0, |a_{ij}|) \Big\} \label{1stequality}\\
\subseteq & \bigcup_{\vc{\beta} \in \bS^+_{\R^n}}  \Big\{ \sum_i d_{i} \beta_i^2  +  \sum_{i < j} \beta_i\beta_j \D_{\bH}(0, |a_{ij}|) \Big\} \nonumber\\
= & \bigcup_{d \in \big[\underline{d}, \overline{d}\big]}  d +  \bD_{\bH}(0, r(d)) \label{2ndequality}\\
= & \bigcup_{d \in \big[\underline{d}, \overline{d}\big]} \D_{\bH}(d, r(d)).\nonumber
\end{align}
Equality (\ref{1stequality}) follows from lemma \ref{lemma_induction} applied to matrix $N$. Equality (\ref{2ndequality}) follows from  dividing the set $\bS^+_{\R^n}$ into the fibers of the function $g(\vc{\beta}) =\sum_i d_i \beta_i^2 $, i.e.
\[
\bS^+_{\R^n}=\bigcup_d \Gamma(d).
\]
For each  fiber $g^{-1}(d) \cap \bS^+_{\R^n}=g^{-1}(d) \cap h^{-1}(0)=\Gamma (d)$, with $\underline{d}\leq d \leq \overline{d}$, $h$ and $\Gamma(d)$ defined as in lemma \ref{disk_continuity}, we have
\begin{align*}
\bigcup_{\vc{\beta} \in \Gamma(d) } \Big\{ \sum_i d_{i} \beta_i^2 +  \sum_{i < j} \D_{\bH}(0, \beta_i\beta_j|a_{ij}|) \Big\}  & = \bigcup_{\vc{\beta} \in \Gamma(d) } \bigg\{d +  \sum_{i < j} \D_{\bH}(0, \beta_i\beta_j|a_{ij}|)\bigg\}\\
& = d +  \bigcup_{\vc{\beta} \in \Gamma(d) } \sum_{i < j} \D_{\bH}(0, \beta_i\beta_j|a_{ij}|)\\
 & = d +  \bigcup_{\vc{\beta} \in \Gamma(d) } \D_{\bH}\Big(0, \sum_{i < j} \beta_i\beta_j|a_{ij}|\Big)\\
&=d + \D_{\bH}\Bigg(0, \max_{\vc{\beta}\in\Gamma(d)}   \sum_{i < j}\beta_i\beta_j|a_{ij}|\Bigg) \\
 & = \D_{\bH}(d,r(d)).
\end{align*}

Now, to prove the converse inclusion we need to consider three different cases.

\smallskip

\textbf{Case 1:} $y\in\bS_{\bH}(d, r(d))$. We defined in (\ref{raio_d})
\[
r(d)=\max\{f(\vc{\beta}): \sum_i\beta_i^2d_i=d, \vc{\beta}\in\bS^+_{\R^n}\}
\]
and, in (\ref{1stequality}), we saw that
\[
W(A)=\bigcup_{\vc{\beta} \in \bS^+_{\R^n}}\Big\{ \sum_i \beta_i^2 d_i  +  \sum_{i<j} \beta_i\beta_j \bS_{\bH}(0, |a_{ij}|) \Big\}.
\]
Choose $\vc{\beta}^* \in g^{-1}(d)\cap \bS^+_{\R^n}$ such that $r(d)=f(\vc{\beta}^*)$. To conclude that $y \in W(A)$ we will find $y_{ij} \in  \beta^*_i\beta^*_j \bS_{\bH}(0, |a_{ij}|)$ such that $y-d=\sum_{i<j} y_{ij}$. This  is precisely what we did at the end of theorem \ref{tree circular disk}, taking this time $y_{ij}=\beta^*_i\beta^*_j |a_{ij}| \dfrac{y-d}{|y-d|}$. And we just follow the same reasoning we did there, noting that $f(\vc{\beta}^*)=r(d)=|y-d|$.

\smallskip

\textbf{Case 2.1:} $y\in\D_{\bH}(d, r(d))\backslash \bS_{\bH}(d, r(d))$, with $d=\underline{d}$ or $d=\overline{d}$.

Suppose $y \in \D_{\bH}(\underline{d}, r(\underline{d}))$ and $|y-\underline{d}|<r(\underline{d})$. Notice that for $\vc{\beta}$ be such that $\sum \beta^2_i d_i=\underline{d}$, then $\beta^2_k> 0$ only for those $k$'s for which  $d_k=\underline{d}$. Assume, without loss of generality, that those $k$'s are the first $p$ elements in the diagonal of $A$, that is,
\[\{k \in \{1,\ldots,n\}: d_k=\underline{d}\}=\{1,\ldots,p\}.\]
Define
\[
\mathcal{A}=\{\vc{\beta} \in \bS^+_{\R^n}: \sum \beta^2_i d_i=\underline{d}\}=\bS^+_{\R^p}\times \{0\}^{n-p}.
\]
Then, $\mathcal{A}$ is a connected set and we can take a path form the element where $f$ vanishes (for example some vector $\vc{\beta}$ of the canonical basis) to the element where $f$ is maximum and equal to $r(\underline{d})$, as we did in theorem \ref{tree circular disk}. Since $f$ is continuous, the intermediate value theorem ensures that, in such path, all the values in between $0$ and $r(\underline{d})$ are taken by some element in the path. We can conclude that there exists an element $\vc{\gamma} \in \mathcal{A}$ such that $f(\vc{\gamma})=|y-\underline{d}|$ and that $y \in \bS(\underline{d}, f(\vc{\gamma}))$. By (\ref{1stequality}), $y\in W(A)$. For $d=\overline{d}$ the procedure is analogous.

\smallskip

\textbf{Case 2.2:} $y\in \D_{\bH}(d, r(d))\backslash \bS_{\bH}(d, r(d))$, with $\underline{d}<d<\overline{d}$. Assume that $y \not \in\D_{\bH}(\underline{d}, r(\underline{d}))$, otherwise recur to the previous cases. Then $|y-\underline{d}|>r(\underline{d})$. Let $\rho$ be a function defined over the interval $[\underline{d}, \overline{d}]$ by  $\rho(t)= |y-t|-r(t)$. The function $\rho$ is continuous since the norm is continuous and, by lemma \ref{disk_continuity}, $r$ is continuous. Since $\rho(\underline{d})>0$ and $\rho(d)<0$, continuity of $\rho$ implies the existence of an element $\widetilde{d}$ such that $|y-\widetilde{d}|=r(\widetilde{d})$, and so $y \in  \bS_{\bH}(\widetilde{d}, r(\widetilde{d}))$. This concludes the proof since, by (\ref{1stequality}), $y \in W(A)$.
\end{proof}

The next result identify a class of upper triangular matrices that has convex numerical range.

\begin{cor} \label{prop_diag+shiftlike_convex}
Let $A=D+N\in\M_n(\bH)$, with $D$ a diagonal matrix with real entries, $N$ nilpotent and cycle-free matrix. Then, $W(A)$ is convex.
\end{cor}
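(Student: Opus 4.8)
The plan is to reduce the general cycle-free case to the connected (tree) case already handled by Theorem~\ref{W_is_smaller}, and then to show that the union of disks $\bigcup_{d\in[\underline d,\overline d]}\D_{\bH}(d,r(d))$ is convex. First I would write $P^{\top}AP = A_1\oplus\cdots\oplus A_k$ for a suitable permutation $P$, where each $A_m = D_m + N_m$ with $D_m$ the corresponding diagonal block and $N_m$ a nilpotent tree matrix (the decomposition of $N$ into connected components carries the diagonal along with it, and permutation conjugation fixes the diagonal). Since $W(P^{\top}AP)=W(A)$, Proposition~\ref{prop_iconv} reduces convexity of $W(A)$ to convexity of each $W(A_m)$ together with the fact that the convex hull of convex sets that is produced by $\iConv$ is $\Conv$; so it suffices to prove $W(A_m)$ is convex, i.e. we may assume $N$ is a tree and invoke Theorem~\ref{W_is_smaller}.

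Next I would prove that $S:=\bigcup_{d\in[\underline d,\overline d]}\D_{\bH}(d,r(d))$ is convex. The key geometric observation is that $S$ is rotationally symmetric about the real axis in the following sense: each $\D_{\bH}(d,r(d))$ is a ball centered at a real point $d$, so $S$ is invariant under the action of $\bS_{\bH}$ by conjugation on the imaginary part, and $S\cap\C$ equals $\bigcup_{d}\D_{\C}(d,r(d))$. By Au-Yeung's criterion (\cite[theorem 2]{Ye1}), a subset of $\bH$ of this form is convex if and only if its intersection with $\C$ is convex (and its real projection behaves correctly); concretely it is enough to show the planar region $\bigcup_{d\in[\underline d,\overline d]}\D_{\C}(d,r(d))\subseteq\C$ is convex. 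That planar region is exactly $\{a+bi : \underline d\le a'\le\overline d,\ (a-a')^2+b^2\le r(a')^2 \text{ for some }a'\}$; equivalently it is the region between the graphs of $\pm\varphi$ where $\varphi(a)=\max\{ b\ge 0 : (a-d)^2+b^2\le r(d)^2,\ d\in[\underline d,\overline d]\}$ on the appropriate interval. Since $r$ is continuous (Lemma~\ref{disk_continuity}) and each constituent is a disk (a convex set), the union of a continuously-parametrized family of disks with collinear centers is convex: any two points $p,q\in S\cap\C$ lie in disks $\D_{\C}(d_p,r(d_p))$ and $\D_{\C}(d_q,r(d_q))$, and one checks that every point of the segment $[p,q]$ lies in $\D_{\C}(d_t,r(d_t))$ for the interpolated center $d_t$, using that the disk $\D_{\C}(d_t,r(d_t))$ contains the segment joining the corresponding ``radial'' points — this is where continuity of $r$ and the collinearity of centers are used.

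The main obstacle I expect is making the last step — convexity of the planar union of disks with collinear centers and continuously varying radii — genuinely rigorous rather than ``geometrically obvious.'' The subtlety is that $r$ need not be monotone or concave, so one cannot simply say the upper boundary is a concave function; instead the correct statement is that $S\cap\C$ is the \emph{union} of an interval-indexed family of convex sets whose centers sweep out a line segment, and such a union is automatically convex precisely because for any $p\in\D_{\C}(d_p,r(d_p))$ and $q\in\D_{\C}(d_q,r(d_q))$ and $\lambda\in[0,1]$, the point $\lambda p+(1-\lambda)q$ lies in $\D_{\C}(\lambda d_p+(1-\lambda)d_q,\ \lambda r(d_p)+(1-\lambda)r(d_q))$ by the triangle inequality, and this latter disk is contained in $S$ since $\lambda r(d_p)+(1-\lambda)r(d_q)\le r(\lambda d_p+(1-\lambda)d_q)$ — but wait, that inequality is exactly concavity of $r$, which may fail. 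The honest fix is to observe that we do not need $\lambda r(d_p)+(1-\lambda)r(d_q)\le r(d_t)$: we only need the \emph{single point} $\lambda p+(1-\lambda)q$ to lie in \emph{some} $\D_{\C}(d,r(d))$ with $d\in[\underline d,\overline d]$, and here one argues via the intermediate value theorem applied to the continuous function $t\mapsto |{\lambda p+(1-\lambda)q} - t| - r(t)$ on $[\underline d,\overline d]$, exactly mimicking Case~2.2 of the proof of Theorem~\ref{W_is_smaller}. So the clean route is: show $W(A)$ is closed under the operation $w\mapsto$ real-projection and imaginary-modulus (so Au-Yeung applies), then verify the planar convexity by this IVT argument re-using the machinery already in Theorem~\ref{W_is_smaller}'s proof. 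I would present it in that order, flagging that the IVT step is the one requiring Lemma~\ref{disk_continuity}.
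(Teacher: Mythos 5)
Your reduction to the tree case via $P^{\top}AP=A_1\oplus\cdots\oplus A_k$ and Proposition \ref{prop_iconv} is exactly the paper's second step and is fine. The gap is in the heart of your argument: convexity of $\bigcup_{d\in[\underline{d},\overline{d}]}\D_{\C}(d,r(d))$ does \emph{not} follow from continuity of $r$ together with collinearity of the centers, and your IVT patch does not close the hole you yourself flagged. A union of disks with centers on a segment and continuously varying radii can fail to be convex: take centers $d\in[0,2]$ with $r(0)=r(2)=1$, $r(1)=1/10$ and $r$ continuous; then $i$ and $2+i$ lie in the union while their midpoint $1+i$ lies in no disk of the family. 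Your fix applies the intermediate value theorem to $\rho(t)=|\lambda p+(1-\lambda)q-t|-r(t)$, but the IVT needs a sign change, i.e. you must already know that the point lies in (or within reach of) some disk of the family --- which is precisely what is in question. In Case 2.2 of Theorem \ref{W_is_smaller} the point $y$ was \emph{assumed} to lie in some $\D_{\bH}(d,r(d))$, which supplied $\rho(d)<0$; here no such $d$ is given, and in the example above $\rho(t)>0$ for every $t$. So the ``geometrically obvious'' step cannot be made rigorous at this level of generality; any direct proof would have to use finer properties of the particular $r$ defined in (\ref{raio_d}), in effect the concavity-type behaviour you correctly observed you are not entitled to assume. (A side remark: your appeal to \cite[theorem 2]{Ye1} is also misstated --- that theorem says $W(A)$ is convex iff $\pi_{\C}(W(A))=W(A)\cap\C$, not that convexity of $W(A)\cap\C$ suffices; the equivalence you actually use, valid for sets that are unions of similarity classes, is true but would need its own short proof, and in any case it only shifts the burden onto the planar claim that fails.)

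The paper's proof sidesteps all of this. It never attempts to show that the union of disks is convex by planar geometry; instead it verifies Au-Yeung's criterion \cite[theorem 3]{Ye1}: $W(A)$ is convex iff $\pi_{\R}(W(A))=W(A)\cap\R$. From the description $W(A)=\bigcup_{d}\D_{\bH}(d,r(d))$ this equality is essentially immediate, because the real projection of each disk $\D_{\bH}(d,r(d))$ is the interval $[d-r(d),d+r(d)]$, which is itself contained in that disk and hence in $W(A)$. The convexity is then delivered by Au-Yeung's theorem (ultimately resting on the So--Thompson convexity of the upper bild), i.e. by the fact that the set in question is a numerical range --- not by continuity of $r$. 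If you want to salvage your write-up, replace the planar-convexity argument by this projection check; the continuity Lemma \ref{disk_continuity} is needed for Theorem \ref{W_is_smaller} itself, not for the convexity conclusion.
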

\begin{proof}
We start assuming that $N$ is a tree. In this case, from theorem \ref{W_is_smaller}, for any $w \in W(A) = \bigcup_{d \in \big[\underline{d}, \overline{d}\big]} \D_{\bH}(d, r(d))$, there is a $d \in \big[\underline{d}, \overline{d}\big] $  such that $w \in \D_{\bH}(d, r(d))$, which implies $|\pi_{\R}(w)-d| \leq r(d)$.  Thus $ d-r(d) \leq \pi_{\R}(w) \leq d+r(d)$ and we end up concluding that
\[
\pi_{\R}(W(A)) \subset W(A) \cap \R= \Big[\min_{d \in [\underline{d}, \overline{d}]} \big( d-r(d)\big), \max_{d \in [\underline{d}, \overline{d}]} \big( d-r(d)\big)\Big]
\]
and since $\pi_{\R}(W(A)) \supset W(A) \cap \R$, we conclude that $\pi_{\R}(W(A)) = W(A) \cap \R$. By \cite[theorem 3]{Ye1} the numerical range is convex.

In the case where $N$ is not a tree, then there exists a permutation matrix $P$ such that $P^{\top}NP=N_1\oplus\ldots\oplus N_k$, where  each $N_i$ is a tree, square matrix. Since $P^{\top}DP$ is still a real diagonal matrix, we have
\begin{align}
  W(A) & = W(D+N)\nonumber \\
  & = W(P^{\top}(D+N)P)=W((D_1+N_1)\oplus...\oplus (D_k+N_k))\nonumber \\
   & =\mathrm{iconv}\,\{W(D_1+N_1),...,W(D_k+N_k)\}\nonumber \\
   & =\mathrm{conv}\,\{W(D_1+N_1),...,W(D_k+N_k)\}.\label{convD+N}
\end{align}
The result follows from proposition \ref{prop_iconv} and the first part of this corollary.
\end{proof}

\begin{ex}
Let $A=\begin{pmatrix} d_1 &  a_{12} & a_{13}\\
                  0 &  d_2 & 0 \\
                   0 &  0 & d_2  \end{pmatrix}$
, where $d_1, d_2\in \R$, $d_1\neq d_2$, and $a_{12}, a_{13}\in \Hq$.
Since $A$ can be written as  $A=d_1 I + (d_2-d_1)\tilde{A}$, with
$
\tilde{A}=\begin{pmatrix}
              0 &  q_{12} & q_{13}\\
                  0 &  1 & 0 \\
                   0 &  0 & 1 \\
            \end{pmatrix},
$
  we have $W(A)=d_1 + (d_2-d_1) W(\tilde{A})$, which means that it is enough to study $W(\tilde{A})$.

By theorem \ref{W_is_smaller}, we have that $W(\tilde{A})=\bigcup_{d \in [0, 1]} \D_{\Hq}(d, r(d))$, with $r(d)=\sqrt{ k d(1-d)}$ and $k={|q_{12}|^2+|q_{13}|^2}$.  Since $W(\tilde{A})\cap \C = \bigcup_{d \in [0, 1]} \D_{\C}(d, r(d))$, we will prove that this union of disks is an ellipse:
\[
\bigcup_{d \in [0, 1]} \D_{\C}(d, \sqrt{k d(1-d)}=\Big\{(x,y)\in \R^2:\frac{(x-\frac{1}{2})^2}{\frac{k+1}{4}}+\frac{y^2}{\frac{k}{4}}\leq 1\Big\}\equiv{\mathcal{E}}.
\]
To show that
${\mathcal{E}} \subseteq \bigcup_{d \in [0, 1]} \D_{\C}(d,\sqrt{k d(1-d)}),$ notice that for $(a,b)\in{\mathcal{E}}$, $(a,b)\in \D_{\C}(d_0,\sqrt{kd_0(1-d_0)})$ with $d_0=\tfrac{2a+k}{2(k+1)} \in [0,1]$.

Conversely, let $(a,b)\in \D_{\C}(d, \sqrt{kd(1-d)})$. We want to show now that $\frac{(a-\frac{1}{2})^2}{\frac{k+1}{4}}+\frac{b^2}{\frac{k}{4}}\leq 1$ , i.e, $\frac{k}{k+1}(a-\frac{1}{2})^2+{b^2}-{\frac{k}{4}}\leq 0$. Since $b^2=kd(1-d)-(a-d)^2$, we have that
\[
\frac{k}{k+1}\Big(a-\frac{1}{2}\Big)^2+{b^2}-{\frac{k}{4}}=-\frac{1}{k+1}((k+1)d-a)^2+\frac{k}{k+1}((k+1)d-a)-\frac{k^2}{4(k+1)}.
\]
This is a second degree polynomial in $(k+1)d-a$, with down concavity and always non-positive, so we conclude that $(a,b)\in \mathcal{E}$.
\end{ex}

\section{Convexity and circularity of $3\times 3$ nilpotent matrices}
Our main goal in this section is to establish necessary and sufficient conditions for  quaternionic $3\times 3$ nilpotent matrices to have circular or convex numerical range. We start by finding out that this condition for circularity is related to the product of all non-zero elements of the matrix. In par\-ti\-cular, it relates the numerical range's circularity with the product $a^*_{13}a_{12}a_{23}$ vanishing or not. Theorem \ref{NSC 3x3 convex} gives a  condition for the convexity of the numerical range in terms of the values assumed by exactly the same product. Consequently, the numerical range is convex if and only if $a^*_{13}a_{12}a_{23} \in \R$.


\begin{thm} \label{NR_3x3_disk}
Let $A\in \M_3(\bH)$ be a nilpotent matrix. Then, $W(A)$ is a disk with center at the origin, if and only if, $A$ is cycle-free.
\end{thm}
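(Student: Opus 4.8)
The backward implication is already in hand: if $A$ is cycle-free and nilpotent, then Corollary \ref{cor_nil_cycle-free} gives that $W(A)$ is a disk centered at the origin. So the work is entirely in the forward direction: assuming $W(A)$ is a disk centered at the origin (the center being forced by Theorem \ref{center at origin}), I must show $A$ has no cycle. Since $A$ is upper triangular and $3\times 3$ with zero diagonal (nilpotency forces the complex diagonal to be zero), the only possible cycle in $\mathcal{G}_A$ is the triangle $1$--$2$--$3$, which occurs precisely when all three off-diagonal entries $a_{12}, a_{13}, a_{23}$ are nonzero. Thus the statement to prove is the contrapositive: if $a_{12}a_{13}a_{23}\neq 0$, then $W(A)$ is \emph{not} a disk centered at the origin.

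The plan is to compute enough of $W(A)$ explicitly to exhibit an obstruction to circularity. Parametrize $\vc{x}=(\beta_1 z_1,\beta_2 z_2,\beta_3 z_3)\in\bS_{\bH^3}$ with $\vc{\beta}\in\bS^+_{\R^3}$ and $z_k\in\bS_{\bH}$, so that
\[
\vc{x}^*A\vc{x}=\beta_1\beta_2\, z_1^*a_{12}z_2+\beta_1\beta_3\, z_1^*a_{13}z_3+\beta_2\beta_3\, z_2^*a_{23}z_3.
\]
The key point is that, unlike the tree case of Lemma \ref{lemma_induction}, the three summands are \emph{not} independently controllable: fixing $z_1,z_2,z_3$ couples them, and the relevant quantity is the "triangle product" $a_{13}^*a_{12}a_{23}$ (note $z_1^*a_{13}z_3$, $z_1^*a_{12}z_2$, $z_2^*a_{23}z_3$ multiply, after suitable conjugation, to something controlled by $a_{13}^*a_{12}a_{23}$ up to the $z_k$'s). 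I would first reduce to the Bild: since $W(A)$ is invariant under similarity classes and a disk centered at the origin corresponds to a disk $\D_\C(0,r)$ in the Bild, it suffices to show $B(A)=W(A)\cap\C$ is not a centered disk when the triangle product is nonzero. Then I would bound $\max_{\vc{x}}|\vc{x}^*A\vc{x}|$ — the candidate radius — and separately produce a real element of $W(A)$ (or a suitable boundary direction) whose modulus is strictly smaller than this maximum, contradicting circularity. Concretely, the maximum of $|\vc{x}^*A\vc{x}|$ over the full sphere is achieved with all $z_k$ aligned so the three terms add in modulus, giving radius $\max_{\vc{\beta}}(\beta_1\beta_2|a_{12}|+\beta_1\beta_3|a_{13}|+\beta_2\beta_3|a_{23}|)$; but I would show that along certain constrained directions (e.g. those forced by the phase relations among the $z_k$) one cannot reach this value, so the set of attained moduli in a fixed direction is a proper subinterval, breaking the disk.

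A cleaner route, which I would pursue in parallel, is to invoke the $4\times 4$-to-complex machinery: by Schur we may take $A$ upper triangular with zero diagonal, and then apply the characterization of circularity for complex nilpotent matrices. Using $W(A)\cap\C=W_\C(\chi(A))$ when $W(A)$ is convex (as in the proof of Theorem \ref{center at origin}), circularity of $W(A)$ forces $W_\C(\chi(A))$ to be a centered disk, where $\chi(A)\in\M_6(\C)$. One then shows — via the known complex criterion (the link to Chien--Tam \cite{CT} advertised in the introduction, and the $2\times2$/$3\times3$ complex theory) — that this fails unless the triangle product $a_{13}^*a_{12}a_{23}$ vanishes; indeed the imaginary part of this product obstructs the rotational symmetry of $W_\C(\chi(A))$. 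I expect the anticipated Theorem \ref{NSC 3x3 convex} and its relation $a_{13}^*a_{12}a_{23}\in\R$ to feed directly into this.

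The main obstacle is the explicit control of $W(A)$ in the cyclic case: the three quaternionic terms with shared variables $z_1,z_2,z_3$ do not decouple, so Lemma \ref{lemma_induction} does not apply and I cannot simply write $W(A)$ as a sum of spheres. I would handle this by a change of variables absorbing $z_1$ (replace $a_{12}\mapsto z_1^*a_{12}z_2$, etc., reducing two of the three $z_k$'s to the identity via unitary freedom), leaving a single quaternionic sphere parameter plus the $\vc{\beta}$ simplex; then the remaining optimization is low-dimensional and one can see directly that the modulus attained in a fixed direction fails to be a full interval reaching the global maximum. Verifying this last inequality — that $a_{13}^*a_{12}a_{23}\notin\R$ genuinely shrinks the range in some direction — is the crux, and is where I expect the real computation to live.
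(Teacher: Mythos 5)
Your backward direction is exactly the paper's (Corollary \ref{cor_nil_cycle-free}), and your first route for the forward direction starts on the same track as the paper: write $\vc{x}^*A\vc{x}=\beta_1\beta_2 z_1^*a_{12}z_2+\beta_1\beta_3 z_1^*a_{13}z_3+\beta_2\beta_3 z_2^*a_{23}z_3$, note that the largest attainable modulus is $a=\max_{\vc{\beta}\in\bS^+_{\R^3}}(\beta_1\beta_2|a_{12}|+\beta_1\beta_3|a_{13}|+\beta_2\beta_3|a_{23}|)$, and argue that saturation of the triangle inequality constrains the direction. But you stop precisely at the decisive computation (``is where I expect the real computation to live''), and --- more seriously --- you aim it at the wrong condition. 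You repeatedly frame the obstruction as $a_{13}^*a_{12}a_{23}\notin\R$ (``the imaginary part of this product obstructs the rotational symmetry''). That is the \emph{convexity} criterion of Theorem \ref{NSC 3x3 convex}, not the circularity criterion: the theorem you are proving requires non-circularity whenever $a_{12}a_{13}a_{23}\neq 0$, including when $a_{13}^*a_{12}a_{23}$ is real and nonzero (e.g.\ all three entries real and nonzero, where $W(A)$ is convex but still not a disk). A proof plan that only detects a non-real triangle product therefore leaves a genuine hole.

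The paper closes this gap with the phase analysis you deferred: writing $a_{ij}=|a_{ij}|w_{ij}$, equality $|{\vc{x}^*A\vc{x}}|=a$ forces the three terms to be collinear, i.e.\ $z_1^*w_{12}z_2=z_1^*w_{13}z_3=z_2^*w_{23}z_3=q$ for a common $q\in\bS_{\bH}$, and solving this system (possible only because all three entries are nonzero) forces $q=z_3^*w_{13}^*w_{12}w_{23}z_3$, so the modulus $a$ is attained exactly for $q$ in the similarity class $[\,w_{13}^*w_{12}w_{23}\,]$, which is always a proper subset of $\bS_{\bH}$ --- a singleton $\{\pm 1\}$ when the product is real. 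Since some $aq$ lies in $W(A)$ while $a\tilde q$ does not for $\tilde q$ outside this class, $W(A)$ cannot be the disk $\D_{\bH}(0,a)$, and by Theorem \ref{center at origin} no other disk is possible. Note this argument works uniformly in the real-product case, which your mechanism would miss. Your proposed fallback via $\chi(A)\in\M_6(\C)$ is also not available as stated: the cited Chien--Tam result is for $4\times 4$ complex nilpotent matrices, and no circularity criterion for the $6\times 6$ matrix $\chi(A)$ is established in the paper, so it cannot substitute for the alignment computation.
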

\begin{proof}
Sufficiency was proved in corollary \ref{cor_nil_cycle-free}. For necessity we will show that if $A$ has a cycle then $W(A)$ is not a disk. We just need to observe that there are some elements $q \in \bS_\bH$ for which $aq \in W(A)$, with
\[
a=\max \{ \beta_1\beta_2|a_{12}|+\beta_1\beta_3|a_{13}|+\beta_2\beta_3|a_{23}|\},
\]
and some others $\tilde q\in \bS_\bH$ for which $a\widetilde{q}\notin W(A)$. If $w\in W(A)$ then, by the triangle inequality, $|w|\leq a$, and the equality holds if, and only if, all the terms of $\vc{x}^*A\vc{x}$ are collinear. We have that $aq \in W(A)$ if, and only if, any term of $\vc{x}^*A\vc{x}$ is in the real span of $q$, for some $q\in\bS_{\bH}$, that is, writing $a_{ij}=|a_{ij}|w_{ij}$ with $w_{ij}\in\bS_{\bH}$,
\[
\begin{cases}
  \beta_1\beta_2|a_{12}| z_1^* w_{12} z_2 & =\beta_1\beta_2|a_{12}| q \\
  \beta_1\beta_3|a_{13}| z_1^* w_{13} z_3 & =\beta_1\beta_3|a_{13}| q\\
  \beta_2\beta_3|a_{23}| z_2^* w_{23} z_3 & =\beta_2\beta_3|a_{23}| q
\end{cases}
\]
If $a_{13}a_{12}a_{23} \neq 0$ (i.e, $A$ has cycles) the system has solution only  when $q=z_3^* w_{13}^*w_{12}w_{23} z_3$, for some $z_3\in\bS_{\bH}$. That is, only for those $q \in \big[w_{13}^*w_{12}w_{23}\big]$ can we reach the maximum $aq \in W(A) $. But then $W(A) $ is not circular since for $\tilde q\notin \big[w_{13}^*w_{12}w_{23}\big]$, $a \tilde q \notin W(A)$ but $a q \in W(A)$ for all $q \in \big[w_{13}^*w_{12}w_{23}\big]$.
 We conclude that when $A$ has cycles the numerical range is not circular.
\end{proof}

We will now obtain a similar result for the convexity of the numerical range of a matrix $A$ in $\M_3(\Hq)$, that is, $W(A)$ is convex, if and only if, $a^*_{13}a_{12}a_{23}\in\R$. The argument uses two known results: \cite[Theorem 3]{Ye1}, which says that $W(A)$ is convex if, and only if, $W(A)\cap \R =\pi_{\R}\big(W(A)\big)$, and that $W(A)\cap \R$ is a closed interval, see \cite[Corollary 1]{Ye1}. Since the numerical range is connected \cite[Theorem 3.10.7]{R} we know that $\pi_{\R}\big(W(A)\big)=[m,M]$ with
\begin{align*}
m & =  \min \pi_\R (W(A)),\\
M & =  \max \pi_\R (W(A)).
\end{align*}
Thus we can conclude that the numerical range is convex if, and only if,  $W(A)\cap \R=[m,M]$. That is, $W(A)$ is convex if, and only if, there are $\vc{v}$ and $\vc{\hat{v}}$ in $\bS_{\Hq^3}$, such that
\begin{align}\label{v_hat{v}}
\vc{v}^*A\vc{v}=M \,\,\, & \textrm{and}\,\,\,\vc{\hat{v}}^*A\vc{\hat{v}}=m.
\end{align}
Necessarily, there are $\vc{y}, \vc{\hat{y}}\in \bS_{\bH^3}$,  such that $M= \pi_\R (\vc{y}^*A\vc{y})$ and $m= \pi_\R (\vc{\hat{y}}^*A\vc{\hat{y}})$. The following lemma is preparatory to reach the conclusion in (\ref{v_hat{v}}).

\begin{lemma}\label{prep_for_general}
Let $A\in \M_3(\bH)$ be a nilpotent matrix satisfying the following condition:
\begin{align}\label{R-linearly_independent}
\textrm{the two quaternions}\,\,\,a_{12},\, a_{13}a_{23}^*\,\,\, & \textrm{are}\,\,\R-\textrm{linearly independent}.
\end{align}
Suppose that $\pi_\R (\vc{y}^*A\vc{y})=M$. Then,
\[
y^*_1(a_{12}y_2+a_{13}y_3), y^*_2(a_{12}^*y_1+a_{23}y_3), (y^*_1 a_{13}+y^*_2 a_{23})y_3 \in \R\setminus\{0\}
\]
\end{lemma}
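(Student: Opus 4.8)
The plan is to exploit the characterization of $M$ as the maximum of the real part over the unit sphere, and to use a first-order optimality argument. Write $\vc{y}=(y_1,y_2,y_3)\in\bS_{\bH^3}$ with $\pi_\R(\vc{y}^*A\vc{y})=M$. Since $A$ is nilpotent and upper triangular, $\vc{y}^*A\vc{y}=y_1^*a_{12}y_2+y_1^*a_{13}y_3+y_2^*a_{23}y_3$, and its real part is a smooth function on the sphere $\bS_{\bH^3}\cong \bS^{11}\subset\R^{12}$. First I would set up Lagrange multipliers: at the maximizer $\vc{y}$, the gradient of $\vc{x}\mapsto \pi_\R(\vc{x}^*A\vc{x})$ must be parallel to the gradient of $\vc{x}\mapsto|\vc{x}|^2$, which forces $A\vc{y}+A^*\vc{y}=2\lambda\vc{y}$ for some $\lambda\in\R$ (here I use that $\pi_\R(\vc{x}^*A\vc{x})=\tfrac12\pi_\R(\vc{x}^*(A+A^*)\vc{x})$ and that the real gradient of a Hermitian form is $(A+A^*)\vc{x}$, viewing $\bH^3$ as $\R^{12}$). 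Writing this out componentwise with $A+A^*$ explicitly gives three quaternionic equations relating $y_1,y_2,y_3$ and $\lambda$.

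Next I would show $\lambda=M$ and extract the three scalar equalities. Left-multiplying the vector equation $(A+A^*)\vc{y}=2M\vc{y}$ by $\vc{y}^*$ and taking real parts recovers $2\pi_\R(\vc{y}^*A\vc{y})=2M=2\lambda$, so $\lambda=M$; in particular $M\ne 0$ unless $A=0$ on the relevant block, which the hypothesis \eqref{R-linearly_independent} rules out. From the three rows of $(A+A^*)\vc{y}=2M\vc{y}$ one reads off: $a_{12}y_2+a_{13}y_3=2My_1$, $a_{12}^*y_1+a_{23}y_3=2My_2$, and $a_{13}^*y_1+a_{23}^*y_2=2My_3$. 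Multiplying the first on the left by $y_1^*$, the second by $y_2^*$, the third by $y_3^*$, and conjugating the third, yields
\[
y_1^*(a_{12}y_2+a_{13}y_3)=2M|y_1|^2,\quad y_2^*(a_{12}^*y_1+a_{23}y_3)=2M|y_2|^2,\quad (y_1^*a_{13}+y_2^*a_{23})y_3=2M|y_3|^2,
\]
so each of the three quantities is real. It remains to rule out that any of them is zero, equivalently (since $M\ne 0$) that some $y_\ell=0$.

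The main obstacle is this last nonvanishing claim, and here is where the hypothesis \eqref{R-linearly_independent} that $a_{12}$ and $a_{13}a_{23}^*$ are $\R$-linearly independent enters. I would argue by contradiction: if $y_1=0$, the first optimality row gives $a_{12}y_2+a_{13}y_3=0$; if moreover the sum $a_{12}y_2+a_{13}y_3$ is the only term surviving in $\vc{y}^*A\vc{y}$ one checks $M=\pi_\R(y_2^*a_{23}y_3)$, and one can then perturb $\vc{y}$ within the sphere to strictly increase the real part, contradicting maximality — or, more cleanly, show that $y_1=0$ forces $a_{12}y_2 = -a_{13}y_3$, hence (after right-multiplying by $y_3^{-1}$ or $y_2^{-1}$ as appropriate, using that $y_2,y_3\ne 0$ in that sub-case) a relation expressing $a_{12}$ as a quaternion multiple of $a_{13}a_{23}^*$ up to the real scalars coming from $|y_i|$, which combined with the reality of the three displayed quantities collapses to an $\R$-linear dependence of $a_{12}$ and $a_{13}a_{23}^*$, contradicting \eqref{R-linearly_independent}. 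The cases $y_2=0$ and $y_3=0$ are handled symmetrically. One must also treat the degenerate sub-cases where two of the $y_\ell$ vanish (then $\vc{y}^*A\vc{y}=0$, forcing $M=0$, again excluded). I expect the bookkeeping of these sub-cases to be the only delicate part; the optimality computation itself is routine once one fixes the identification $\bH^3\cong\R^{12}$ and the fact that the real-part Hessian structure makes the stationarity condition exactly $(A+A^*)\vc{y}=2M\vc{y}$.
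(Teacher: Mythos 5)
Your proposal is correct, and it takes a genuinely different route from the paper's. You get reality from the first-order optimality condition $(A+A^*)\vc{y}=2M\vc{y}$ (Lagrange multipliers on $\bS_{\bH^3}\subset\R^{12}$, the multiplier being real and identified as $M$ by pairing with $\vc{y}^*$), so the three quantities are read off as $2M|y_1|^2$, $2M|y_2|^2$, $2M|y_3|^2$; the paper instead proves reality term by term with a rotation trick, replacing $y_1$ by $y_1z$ where $z^*\omega=|\omega|$ and invoking maximality. For the nonvanishing, the paper assumes e.g.\ $y_1^*(a_{12}y_2+a_{13}y_3)=0$, notes that then $M=|a_{23}|/2$, and beats this value with an explicit competitor $\vc{t}$ built from $a=a_{12}+a_{13}a_{23}^*/|a_{23}|\neq 0$; in your scheme you must instead show $M>0$ and $y_\ell\neq 0$, and the ``more cleanly'' branch of your sketch does work: if $y_1=0$, rows 1 and 2 of your stationarity system give $a_{12}y_2=-a_{13}y_3$ and $a_{23}y_3=2My_2$ with $y_2,y_3\neq 0$ (two vanishing components would force $M=0$), hence $a_{12}=-\tfrac{2M}{|a_{23}|^2}\,a_{13}a_{23}^*$, an $\R$-linear dependence contradicting (\ref{R-linearly_independent}); the cases $y_2=0$ and $y_3=0$ similarly yield $a_{13}a_{23}^*\in\R\,a_{12}$. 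Do spell out these short computations, and replace the loose ``$M\neq 0$ unless $A=0$ on the relevant block'' by the precise observation that (\ref{R-linearly_independent}) forces $a_{12},a_{13},a_{23}\neq 0$, so testing $\vc{x}=\big(\tfrac{1}{\sqrt{2}},\tfrac{a_{12}^*}{\sqrt{2}|a_{12}|},0\big)$ gives $M\geq |a_{12}|/2>0$. As for what each approach buys: your stationarity identity hands you the system (\ref{system}) used in the proof of theorem \ref{NSC 3x3 convex} directly, with the explicit values $\alpha_1=\alpha_2=\alpha_3=1/(2M)$, so it would streamline that argument; the paper's proof is more elementary and self-contained, working only with quaternionic algebra and explicit trial vectors rather than smooth optimization on the sphere.
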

\begin{proof}
To prove that $\omega=y^*_1(a_{12}y_2+a_{13}y_3)\in \R$ we start by writing
\begin{align*}\label{maximum projection}
  M=&\pi_\R (y^*_1(a_{12}y_2+a_{13}y_3)+y^*_2a_{23}y_3)\\
   =&\pi_\R (\omega+y^*_2a_{23}y_3).
\end{align*}
%
There exists $z\in \bS_{\bH}$ such that
\[
z^* \omega=|\omega|\in\R
\]
Taking now $\vc{\tilde{y}}=(y_1 z, y_2, y_3)\in \bS_{\bH^3}$, we have:
\begin{align*}
M=\pi_\R(\vc{y}^*A\vc{y}) \geq & \pi_\R(\vc{\tilde{y}}^*A\vc{\tilde{y}}).
\end{align*}
Then,
\begin{align*}
M=\pi_\R(\omega+y^*_2a_{23}y_3) \geq & \pi_\R(z^* \omega+y^*_2a_{23}y_3)
\end{align*}
and by $\R$-linearity of $\pi_\R$ we have
\begin{align*}
\pi_\R(\omega) \geq & \pi_\R(z^* \omega)=|\omega|.
\end{align*}
We conclude that $\omega=|\omega|$ and so $\omega\in\R$.

Now we prove that $y^*_1(a_{12}y_2+a_{13}y_3) \neq 0$ by assuming that $y^*_1(a_{12}y_2+a_{13}y_3) = 0$ and then finding a vector $\vc{t} \in \bS_{\bH^3}$ with $\vc{t}^*A\vc{t}> \pi_{\R}\big(\vc{y}^*A\vc{y}\big)$, reaching a contradiction with $\vc{y}$ being the maximizer of $\pi_{\R}\big(\vc{x}^*A\vc{x}\big)$ for $\vc{x} \in \bS_{\bH^3}$.
Condition (\ref{R-linearly_independent}) implies that $a_{23}\neq 0$ and $a\equiv a_{12}+a_{13}\dfrac{a_{23}^*}{|a_{23}|}\neq 0 $. If $y^*_1(a_{12}y_2+a_{13}y_3) = 0$ then %
\[
\pi_{\R}\big(\vc{y}^*A\vc{y}\big)=y_2^*a_{23}y_3= \dfrac{|a_{23}|}{2}
\]
The previous equality comes from the maximum of $f(\alpha_1,\alpha_2)=\alpha_1\alpha_2$ , subject to $\alpha_1^2+\alpha_2^2=1$, being $\dfrac{1}{2}$. Let $t_1=\beta_1 \in \R$, $t_2=\beta_2 \dfrac{a^*}{|a|}$ and $t_3=\dfrac{a_{23}^*}{|a_{23}|}t_2$, with $\beta_2=\dfrac{\sqrt{2}}{2}-\epsilon$ and $\beta_1^2+2\beta^2_2=1$.
\begin{align*}
\vc{t}^*A\vc{t}-\dfrac{|a_{23}|}{2}&=\beta_1\beta_2|a|+\beta_2^2|a_{23}|-\dfrac{|a_{23}|}{2}\\
&=\sqrt{1-2\beta_2^2}\beta_2|a|+\big(\beta_2^2-\dfrac{1}{2}\big)|a_{23}|\\
&= \sqrt{2\sqrt{2}\epsilon-2\epsilon^2}\big(\dfrac{\sqrt{2}}{2}-\epsilon\big)|a|-\big(\sqrt{2}-\epsilon\big)\epsilon|a_{23}|\\
&=\epsilon \bigg\{ \sqrt{ \dfrac{2\sqrt{2}}{\epsilon}-2}(\dfrac{\sqrt{2}}{2}-\epsilon\big)|a|-\big(\sqrt{2}-\epsilon\big)|a_{23}|\bigg\}
\end{align*}
Clearly $\dfrac{1}{\epsilon} \Big(\vc{t}^*A\vc{t}-\dfrac{|a_{23}|}{2}\Big)>0$ for very small and positive $\epsilon$. Thus, $\vc{t}^*A\vc{t}>\pi_{\R}\big(\vc{y}^*A\vc{y}\big)$, and we found a contradiction.

The second case, $y^*_2(a_{12}^*y_1+a_{23}y_3)\in \R\setminus\{0\}$, follows from writing $M$ as follows
 \begin{align*}\label{maximum projection}
  M=&\pi_\R (y^*_1a_{12}y_2+ y^*_1a_{13}y_3+y^*_2a_{23}y_3)\\
  = &\pi_\R(y^*_1a_{12}y_2)+ \pi_\R (y^*_1a_{13}y_3)+\pi_\R(y^*_2a_{23}y_3)\\
  = &\pi_\R(y^*_2a_{12}^*y_1)+ \pi_\R (y^*_1a_{13}y_3)+\pi_\R(y^*_2a_{23}y_3)\\
  = &\pi_\R\big(y^*_2(a_{12}^*y_1+a_{23}y_3)\big)+ \pi_\R (y^*_1a_{13}y_3).
 \end{align*}
Now we proceed as in the first case to find out that $y^*_2(a_{12}^*y_1+a_{23}y_3)\in \R$. To prove that $y^*_2(a_{12}^*y_1+a_{23}y_3)\neq 0$ we let this time $a\equiv a_{23}+a_{12}^*\dfrac{a_{13}}{|a_{13}|}$. From (\ref{R-linearly_independent}) we have that $a_{13}\neq 0$ and $a\neq 0 $. We will find a contradiction, in the same way as in the first case, assuming that $y^*_2(a_{12}^*y_1+a_{23}y_3)=0$. In this case it must be that $\pi_{\R}\big(\vc{y}^*A\vc{y}\big)= \dfrac{|a_{13}|}{2}$, and we will find a  $\vc{t} \in \bS_{\bH^3}$ with $\vc{t}^*A\vc{t}> \dfrac{|a_{13}|}{2}$. Such $\vc{t}$ has $t_1=\dfrac{a_{13}}{|a_{13}|}t_3$, $t_2=\beta_2$, $t_3=\beta_3 \dfrac{a^*}{|a|}$, with $\beta_3=\dfrac{\sqrt{2}}{2}-\epsilon$ and $\beta_2^2+2\beta^2_3=1$. The rest of the proof proceeds just like the first case.

The proof for case $3$ mimics the previous two.

\end{proof}
Now we will state and prove a necessary and sufficient condition for a nilpotent $3\times 3$ matrix to have convex numerical range.

\begin{thm}\label{NSC 3x3 convex}
Let $A\in \M_3(\bH)$ be a nilpotent matrix. Then, $W(A)$ is convex if, and only if, $a^*_{13}a_{12}a_{23}\in\R$.
\end{thm}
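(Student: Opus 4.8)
The plan is to follow the strategy laid out just before the statement: by \cite[Theorem 3]{Ye1}, $W(A)$ is convex if and only if $\pi_\R(W(A)) = W(A)\cap\R$, and since $W(A)\cap\R$ is a closed interval $[m,M]$ this amounts to exhibiting unit vectors $\vc{v},\vc{\hat v}\in\bS_{\bH^3}$ with $\vc{v}^*A\vc{v}=M$ and $\vc{\hat v}^*A\vc{\hat v}=m$ (that is, purely real values attaining the extremes of the projection). I would split into the two directions and, crucially, first dispose of the degenerate case: if $a_{12}$ and $a_{13}a_{23}^*$ are $\R$-linearly \emph{dependent} (including the case where one of them is zero), then after factoring out norms one checks directly that all three ``edge products'' can be simultaneously aligned, forcing $W(A)$ to be a disk centered at the origin (by Theorem~\ref{NR_3x3_disk} when it is cycle-free, or by a direct computation showing the maximizing $\vc{x}$ yields a real value in the remaining cases); in that situation $a_{13}^*a_{12}a_{23}$ is automatically real, so both sides of the equivalence hold. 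So the substance is the case where condition (\ref{R-linearly_independent}) holds, and I would apply Lemma~\ref{prep_for_general}.

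For the forward direction (convex $\Rightarrow a_{13}^*a_{12}a_{23}\in\R$): assuming (\ref{R-linearly_independent}), pick $\vc{y}$ with $\pi_\R(\vc{y}^*A\vc{y})=M$. By Lemma~\ref{prep_for_general}, the three quantities $y_1^*(a_{12}y_2+a_{13}y_3)$, $y_2^*(a_{12}^*y_1+a_{23}y_3)$, $(y_1^*a_{13}+y_2^*a_{23})y_3$ are all nonzero reals. In particular all of $y_1,y_2,y_3$ are nonzero, and the three ``partial sums'' are real. From the reality of $y_1^*a_{12}y_2 + y_1^*a_{13}y_3$ and of $y_1^*a_{13}y_3$ we cannot yet conclude each term is real, but combining all three relations (they are three linear-in-the-cross-terms conditions on the three cross terms $y_1^*a_{12}y_2$, $y_1^*a_{13}y_3$, $y_2^*a_{23}y_3$) one solves: the three cross terms $p:=y_1^*a_{12}y_2$, $q:=y_1^*a_{13}y_3$, $r:=y_2^*a_{23}y_3$ satisfy $p+q\in\R$, $\bar p+r\in\R$ (from the second, using $y_2^*a_{12}^*y_1=\overline{y_1^*a_{12}y_2}=\bar p$), $q+r\in\R$. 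Then $\pi_{\bP}(p)=-\pi_{\bP}(q)$, $\pi_{\bP}(p)=\pi_{\bP}(r)$, $\pi_{\bP}(q)=-\pi_{\bP}(r)$, which together force $\pi_{\bP}(p)=\pi_{\bP}(q)=\pi_{\bP}(r)=0$, i.e. each cross term is real. But $p\,r\,q^* = (y_1^*a_{12}y_2)(y_2^*a_{23}y_3)(y_3^*a_{13}^*y_1)$; choosing $\vc{y}$ suitably and tracking that $|y_i|^2$ are positive reals, the product $p r q^*$ is a positive real multiple of $a_{13}^*a_{12}a_{23}$ conjugated by the unit quaternion $y_1/|y_1|$ — and a quaternion similar to a real number is real, so $a_{13}^*a_{12}a_{23}\in\R$. (One must be a little careful with the order/placement of the $y_i$, rewriting $pq^*r$ or $q^*pr$ as $y_1^*a_{13}^*(\cdots)a_{12}(\cdots)a_{23}y_3$ so that after cancelling $|y_2|^2$, $|y_3|^2$ one lands on $s^*(a_{13}^*a_{12}a_{23})s$ for a unit quaternion $s$.)

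For the converse ($a_{13}^*a_{12}a_{23}\in\R \Rightarrow$ convex): I would construct explicit maximizing and minimizing vectors. Writing $a_{ij}=|a_{ij}|w_{ij}$ with $w_{ij}\in\bS_{\bH}$, the hypothesis says $w_{13}^*w_{12}w_{23}=:c\in\bS_\bH$ with $c\in\R$, i.e. $c=\pm1$, so that $w_{12}w_{23}=\pm w_{13}$. This collinearity is exactly what is needed to choose unit quaternions $z_1,z_2,z_3$ simultaneously rotating all three cross terms $z_i^*w_{ij}z_j$ onto $\R$: e.g. set $z_3=1$, $z_2 = w_{23}$ so that $z_2^*w_{23}z_3=1\in\R$ wait — rather choose $z_2,z_3$ so $z_2^*w_{23}z_3\in\R_{>0}$ and $z_1$ so that $z_1^*w_{12}z_2\in\R_{>0}$; then $z_1^*w_{13}z_3 = z_1^*(w_{12}w_{23})z_3\cdot(\pm1)$, and since $w_{12}w_{23}=\pm w_{13}$, a short computation shows $z_1^*w_{13}z_3$ lies in $\R$ as well (it equals $\pm$ the product of the two already-real terms divided by the positive real $z_2^*\!\cdot\! z_2$-type factor — here one uses that $\bS_\bH$ acts and the two chosen rotations are compatible precisely because of $c\in\R$). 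With such $z_i$ fixed, $\vc{x}^*A\vc{x} = \sum_i d_i|x_i|^2 + \sum_{i<j}|a_{ij}|\beta_i\beta_j\,(z_i^*w_{ij}z_j)$ becomes real for every choice of nonnegative $\beta_i$ with $\sum\beta_i^2=1$ (here $d_i=0$ as $A$ is nilpotent); optimizing the real function $\sum_{i<j}\pm|a_{ij}|\beta_i\beta_j$ over $\bS^+_{\R^3}$ (a continuous function on a connected compact set, whose range is therefore a closed interval) realizes both $M=\max$ and $m=-M$ by genuine real values $\vc{v}^*A\vc{v}$, $\vc{\hat v}^*A\vc{\hat v}$. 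Hence $W(A)\cap\R\supseteq[m,M]\supseteq\pi_\R(W(A))$, giving equality and convexity by \cite[Theorem 3]{Ye1}.

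The main obstacle I expect is the bookkeeping in the forward direction: carefully converting the three reality-and-nonvanishing conditions from Lemma~\ref{prep_for_general} into the statement ``each cross term is real'' and then assembling the cross terms, in the correct noncommutative order and with the $|y_i|^2$ factors accounted for, into an expression of the form $s^*(a_{13}^*a_{12}a_{23})s$ with $s\in\bS_\bH$; the quaternionic noncommutativity makes it easy to land on $a_{12}a_{23}a_{13}^*$ or a cyclically permuted version instead, which a priori need not be real even when $a_{13}^*a_{12}a_{23}$ is — so one has to exploit that for nonzero quaternions $\alpha\beta\gamma\in\R \iff \beta\gamma\alpha\in\R \iff$ (conjugate) relations, i.e. that reality of such a triple product is invariant under cyclic permutation and under simultaneous conjugation, a small lemma about quaternions worth isolating. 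The degenerate case (\ref{R-linearly_independent}) failing also needs its own clean argument that $a_{13}^*a_{12}a_{23}\in\R$ and that $W(A)$ is convex, but that should reduce to Theorem~\ref{NR_3x3_disk} / Corollary~\ref{cor_nil_cycle-free} together with the observation that $\R$-dependence of $a_{12}$ and $a_{13}a_{23}^*$ forces the matrix to be (unitarily) of a form already handled.
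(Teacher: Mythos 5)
Your forward direction contains a genuine logical gap. From Lemma \ref{prep_for_general} you extract only the scalar facts $p+q\in\R$, $\bar p+r\in\R$, $q+r\in\R$ for the cross terms $p=y_1^*a_{12}y_2$, $q=y_1^*a_{13}y_3$, $r=y_2^*a_{23}y_3$, and claim these force $\pi_{\bP}(p)=\pi_{\bP}(q)=\pi_{\bP}(r)=0$. They do not: the three conditions amount to $\pi_{\bP}(q)=-\pi_{\bP}(p)$ and $\pi_{\bP}(r)=\pi_{\bP}(p)$, and the third equation is then automatically satisfied, so any common nonzero imaginary part (e.g.\ $p=1+i$, $q=1-i$, $r=1+i$) is consistent with all three sums being real. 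The missing ingredient is exactly what the paper's proof uses beyond the lemma: (i) the reality and nonvanishing of $y_1^*(a_{12}y_2+a_{13}y_3)$ etc.\ is exploited as a \emph{collinearity} statement, $y_i=\alpha_i(\cdots)$ with $\alpha_i\in\R\setminus\{0\}$, which after substitution expresses $y_2$ in terms of $y_3$; and (ii) convexity is invoked a second time, through the reality of the full value $\vc{y}^*A\vc{y}=M$ (choosing $\vc{y}$ attaining the real value $M$, which exists by \cite[Theorem 3]{Ye1}), to isolate the term $y_3^*a_{13}^*a_{12}a_{23}y_3$ and conclude it is real. (Your plan could be repaired along similar lines: adding $p+q+r\in\R$ to your three conditions does force $p,q,r\in\R$, and then your $p\,r\,q^*$ computation and the cyclic-permutation remark finish the argument; but as written the key deduction is false.)

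The converse direction also has a gap. You construct unit quaternions $z_i$ making all three cross terms real (this part is fine when the product is nonzero), but you then assert that optimizing $\sum_{i<j}\pm|a_{ij}|\beta_i\beta_j$ over $\bS^+_{\R^3}$ ``realizes both $M=\max$ and $m=-M$''. Two problems: $m=-M$ is simply false in general (for the real nilpotent matrix with $a_{12}=a_{23}=1$, $a_{13}=-1$ one gets $M=1/2$, $m=-1$), and, more importantly, you have not shown that the maximum/minimum of $\pi_{\R}(\vc{x}^*A\vc{x})$ over the whole quaternionic sphere is attained within your restricted real-valued family; without that, you only exhibit some real points of $W(A)$, not the endpoints of $\pi_{\R}(W(A))$. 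The paper avoids this by observing that when $a_{13}^*a_{12}a_{23}\in\R\setminus\{0\}$ the diagonal unitary $U=\diag(\rho,z_{12}^*\rho,z_{13}^*\rho)$ turns $A$ into a real matrix and then citing \cite[Theorem 3.6]{CDM} (real matrices have convex quaternionic numerical range); filling your gap directly essentially amounts to reproving that result via the Hermitian part. Finally, your claim that $\R$-linear dependence of $a_{12}$ and $a_{13}a_{23}^*$ makes $W(A)$ a disk is wrong (take $a_{12}=a_{13}=a_{23}=1$: the matrix has a cycle, so by Theorem \ref{NR_3x3_disk} its numerical range is not a disk); in that degenerate case all that is needed, and all that is true, is that $a_{13}^*a_{12}a_{23}$ is automatically real, with convexity then coming from the $a_{13}^*a_{12}a_{23}\in\R$ branch of the argument, not from circularity.
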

\begin{proof}
First, consider that $a_{13}^*a_{12}a_{23} \in \R$. The case where $a_{13}^*a_{12}a_{23}=0$ was dealt in theorem \ref{NR_3x3_disk}, the numerical range is circular and therefore convex. For the other cases, the matrix $A$ is unitary equivalent to a real matrix, \emph{i.e} there exists an unitary matrix $U \in \mathcal{M}_n(\Hq)$, such that $U^*AU \in \mathcal{M}_n(\R)$. By \cite[theorem 3.6]{CDM}, we know that the numerical range of any real matrix is convex, thus $W(A)=W(U^*AU)$ is convex.
For the unitary matrix $U$, take the diagonal matrix $\diag (\rho, z_{12}^*\rho, z_{13}^*\rho)$, where $\rho \in \bS_{\Hq}$ and $z_{ij} \in \bS_{\Hq}$ are such that $a_{ij}=|a_{ij}|z_{ij}$. Its now a matter of simple calculations, using that $z_{13}^*z_{12}z_{23}=\pm1$, to check that  $U^*AU \in \mathcal{M}_n(\R)$.

Now we consider the converse implication, that is, if $W(A)$ is convex then $a^*_{13}a_{12}a_{23}\in \R$. If $a_{12},\,a_{13}a_{23}^*$ are $\R$-linearly dependent we easily see that $a_{13}a_{23}^*a_{12}^* \in \R$ and, since $\pi_{\R}(ab)=\pi_{\R}(ba)$, then $a_{13}^*a_{12}a_{23} \in \R$. Therefore, we can assume that $a_{12},\,a_{13}a_{23}^*$ are $\R$-linearly independent and, by lemma \ref{prep_for_general}, conclude that
\begin{equation}\label{equation_y2_y_3}
y^*_1(a_{12}y_2+a_{13}y_3), y^*_2(a_{12}^*y_1+a_{23}y_3), (y^*_1 a_{13}+y^*_2 a_{23})y_3\in \R\setminus\{0\} .
\end{equation}
Hence, for some $\alpha_1, \alpha_2, \alpha_3\in \R\setminus\{0\}$ we can write
\begin{equation}\label{system}
 \left\{
 \begin{array}{c}
  y_1= \alpha_1 (a_{12}y_2+a_{13}y_3) \\
  y_2= \alpha_2 (a_{12}^*y_1+a_{23}y_3)\\
  y_3= \alpha_3 (a_{13}^*y_1+a_{23}^*y_2)
 \end{array}
\right.
\end{equation}

Substituting $y_1$ in the second equation, we get
\begin{equation}\label{eq1}
 (1-\alpha_1\alpha_2|a_{12}|^2)y_2=\alpha_2(\alpha_1 a_{12}^*a_{13}+a_{23})y_3.
\end{equation}

Suppose $1-\alpha_1\alpha_2|a_{12}|^2\neq 0$. We have
\[
y_2=r_2(\alpha_1 a_{12}^*a_{13}+a_{23})y_3, \quad \text{where} \quad r_2=\frac{\alpha_2}{1-\alpha_1\alpha_2|a_{12}|^2}.\]

Therefore, for  $\vc{y}=(y_1, y_2, y_3)\in \bS_{\bH^3}$ such that $M=\pi_\R(\vc{y}^*A\vc{y})$,
\begin{align}
  \vc{y}^*A\vc{y} & ={y}^*_1(a_{12}{y}_2+a_{13}{y}_3)+{y}^*_2a_{23}{y}_3 \nonumber \\
 & ={y}^*_1(a_{12}{y}_2+a_{13}{y}_3)+ r_2 |a_{23}|^2 |{y}_3|^2 + r_2\alpha_1 {y}_3^*a_{13}^*a_{12}a_{23}{y}_3. \label{eq2}
\end{align}
Notice that the first two terms of (\ref{eq2}) are real. Since $W(A)$ is convex, by \cite[theorem 3]{Ye1}, $ M= \vc{y}^*A\vc{y} \in W(A)\cap \R$. Thus, the term ${y}_3^*a_{13}^*a_{12}a_{23}{y}_3$ is also real. This only happens if $a_{13}^*a_{12}a_{23} \in \R$ (since ${y}_3^*a_{13}^*a_{12}a_{23}{y}_3=|y_3|^2\frac{{y}_3^*}{|y_3|}a_{13}^*a_{12}a_{23}\frac{{y}_3}{|y_3|}$ and $\frac{{y}_3^*}{|y_3|}a_{13}^*a_{12}a_{23}\frac{{y}_3}{|y_3|}\sim a_{13}^*a_{12}a_{23}$) or $y_3=0$. The case $y_3=0$ can be ruled out because then  $(y^*_1 a_{13}+y^*_2 a_{23})y_3=0$ and this contradicts (\ref{equation_y2_y_3}).

If $1-\alpha_1\alpha_2|a_{12}|^2=0$ and since $y_3\neq 0$, from (\ref{eq1}) $\alpha_1a^*_{12}a_{13}+a_{23}=0$. It follows that $a^*_{13}a_{12}a_{23}\in\R$.
\end{proof}

We finish with a simple example.

\begin{ex}
Let $A=\begin{pmatrix}0 & i & j  \\ 0 & 0 & k \\ 0 & 0 & 0 \end{pmatrix}$. Since $(-j)ik=-1$, $W(A)$ is convex and noncircular.
\end{ex}

\end{document}